\newtheorem{theorem}{Theorem} [section]
\newtheorem*{claim}{Claim}
\newtheorem{lemma}[theorem]{Lemma}
\newtheorem{proposition}[theorem]{Proposition}
\newtheorem{definition}[theorem]{Definition}
\numberwithin{equation}{section}
\begin{document}
	\title[Horseshoes and Lyapunov exponents for  Banach cocycles]{Horseshoes and Lyapunov exponents for  Banach cocycles over nonuniformly hyperbolic systems }

\author{Rui Zou}
\address{School of Mathematics and Statistics, Nanjing University of Information Science and Technology, Nanjing 210044,  P.R. China}
\email{zourui@nuist.edu.cn }

\author{Yongluo Cao*}

\address{Departament of Mathematics, Shanghai Key Laboratory of PMMP, East China  Normal University,	Shanghai 200062, P.R. China}
\email{ylcao@suda.edu.cn}

\thanks{* Yongluo Cao is corresponding author.  This work was partially supported by NSFC (11771317,  11790274),  Science and Technology Commission
	of Shanghai Municipality (18dz22710000).}

\date{\today}

\begin{abstract}
	Let $f$ be a $C^r$$(r>1)$ diffeomorphism of a compact Riemannian manifold $M$, preserving   an ergodic hyperbolic measure $\mu$  with positive entropy,  and let $\mathcal{A}$ be a H\"older continuous cocycle of injective bounded linear operators acting on a Banach space X.  We prove that  there is a sequence of horseshoes  for $f$ and  dominated splittings for $\mathcal{A}$ on the horseshoes, such that not only the  measure theoretic entropy  of $f$ but also the Lyapunov exponents of  $\mathcal{A}$ with respect to $\mu$ can be approximated by the topological entropy of $f$ and the Lyapunov exponents of $\mathcal{A}$ on the horseshoes, respectively.
\end{abstract}

\subjclass[2010]{37A20, 37D05, 37D25, 37H15.}

\keywords{Cocycles, Lyapunov exponents, horseshoes, dominated splitting, entropy.}

\maketitle

\section{Introduction}
   	Let $f$ be a $C^r$$(r>1)$ diffeomorphism of a compact Riemannian manifold $M$, preserving   an ergodic hyperbolic measure $\mu$  with positive entropy,  and let $\mathcal{A}$ be a H\"older continuous cocycle of injective bounded linear operators acting on a Banach space X. If the cocycle satisfies the so called quasi-compactness condition, then there is a sequence of horseshoes  for $f$ and  dominated splittings for $\mathcal{A}$ on the horseshoes, such that not only the  measure theoretic entropy  of $f$ but also the Lyapunov exponents of  $\mathcal{A}$ with respect to $\mu$ can be approximated by the topological entropy of $f$ and the Lyapunov exponents of $\mathcal{A}$ on the horseshoes, respectively. For an explicit statement, see section 2. 

    This paper is inspired by Katok \cite{Katok}(or Katok, Mendoza \cite[Theorem S.5.9]{katok1995}) and Cao, Pesin, Zhao \cite{caozhao}. The approximation of an ergodic hyperbolic measure by horseshoes was first proved by Katok \cite{Katok}. In \cite{mendoza}, Mendoza showed that,  for a $C^2$ surface diffeomorphism, an ergodic hyperbolic SRB measure can be  approximated by a horseshoe with unstable dimension converging to 1. Avila, Crovisier and Wilkinson \cite{Avila17} stated that the horseshoe constructed by Katok \cite{Katok} also has a dominated splitting( for $Df$) and the Lyapunov exponents of the hyperbolic measure can be approximated by the exponents on the horseshoe. 
    
    In the $C^1$ setting, if  a hyperbolic measure has positive entropy and whose support admits a dominated splitting, Gelfert \cite{Gelfert16} asserted the approximation of ergodic hyperbolic measures by horseshoes. Wang, Zou and Cao \cite{Wangzoucao}  furtherly studied the horseshoe approximation of Lyapunov exponents, which is used to 
    show the arbitrarily large unstable dimension of the horseshoes.
    
   The case of $C^r$$(r>1)$ maps was considered by Chung \cite{Chung99}, Yang \cite{Yang15} and Gelfert \cite{gelfert2010}.  Cao, Pesin and Zhao \cite{caozhao} constructed  repellers such that both the entropy and the Lyapunov exponents can be approximated on the repellers. They also used this result to show the continuity of sub-additive topological  pressure, and then give a lower bound estimate of the Hausdorff dimension of a non-conformal repeller.
    
    For infinite-dimensional dynamical systems,  Lian and  Young \cite{Lian2011lyapunov} generalized Katok's results \cite{Katok} to mapping of Hilbert spaces.  They  also proved analogous results for semiflows on Hilbert spaces \cite{lian2012lyapunov}.  Kalinin and Sadovskaya \cite{Kalinin-Sadovskaya} consider the Holder continuous cocycle $ \mathcal{A}$ for  invertible bounded linear operates on Banach space of $f$ as above, and prove that the upper and lower Lyapunov exponents of $\mathcal A$ with respect to $\mu$ can be approximated in term of the norms of the return values of $\mathcal A$ on hyperbolic periodic point of $f$.

    The main result of this paper is stated in Section 2, together with some notations and preliminaries. In Section 3, we provide the proof of the main result.

\section{Preliminaries}
\subsection{Cocycles and Lyapunov exponents  for cocycles}
  Let $f$ be a $C^r$$(r>1)$ diffeomorphism of a compact Riemannian manifold $M$, and $L(X)$ be the space of bounded linear operators on a Banach space X. Assume $A:M \to L(X)$ is a H\"older continuous map. The{ \em cocycle}~ over $f$ generated by $A$ is a map $\mathcal{A}:M\times \mathbb{N}\to L(X)$ defined by $\mathcal{A}(x,0)=Id,$ and $\mathcal{A}(x,n)=A(f^{n-1}x)\cdots A(fx)A(x)$. We also denote $\mathcal{A}(x,-n):=A(f^{-n}x)^{-1}\cdots A(f^{-2}x)^{-1}A(f^{-1}x)^{-1}$ for $n>0.$ Note that $\mathcal{A}(x,-n)$ is not a map in general. To be more flexible, we also denote $\mathcal{A}_x^n:=\mathcal{A}(x,n).$

   We now study some properties of Lyapunov exponents. The following version of multiplicative ergodic theorem was established by  P. Thieullen \cite{Thieullen87}, based on the work of \cite{Oseledets1968,Ruelle82,mane1983}. In order to state the multiplicative ergodic theorem,  we introduce some definitions. 
    
    Denote by $B_1$ the unit ball of X. Then for any $T\in L(X),$ We define the {\em Hausdorff measure of noncompactness~} of $T$ by 
    \[\|T\|_\kappa :=\inf\{\varepsilon>0:  T(B_1)~\text{can be covered by finite} ~\varepsilon\text{-balls}  \}.\]
    Then by the definition, we have  $\|T\|_\kappa\leq \|T\|$, and $\|\cdot\|_\kappa$ is sub-multiplicative, that is $\|T_2T_1\|_\kappa\leq\|T_2\|_\kappa\cdot\|T_1\|_\kappa$ for any $T_1,T_2\in L(X).$ Let $\mu$ be  an ergodic $f$-invariant measure on $M,$ then by the sub-additive ergodic theorem, the limits
    \begin{align*}
    \chi(\mathcal{A},\mu):=\lim\limits_{n\to +\infty}\frac{1}{n}\int \log \|\mathcal{A}_x^n\|d\mu,\\
    \kappa(\mathcal{A},\mu):=\lim\limits_{n\to +\infty}\frac{1}{n}\int \log \|\mathcal{A}_x^n\|_\kappa d\mu
    \end{align*}
    exist. We call the cocycle $\mathcal{A}$ is {\em quasi-compact~} with respect to $\mu$, if $\chi(\mathcal{A},\mu)>\kappa(\mathcal{A},\mu)$.
    
    Given two topological spaces $Y$, $Z$ and a Borel measure $\mu$ on $Y$, a map $g:Y\to Z$ is called {\em $\mu$-continuous}, if there exists a sequence of pairwise disjoint compact subsets $Y_n\subset Y$, such that $\mu(\cup_{n\geq 1}Y_n)=1$ and $g|_{Y_n}$ is continuous for every $n\geq 1$. We write $\mathbb{N}_k:=\{1,2,\cdots,k \}$ for $1\leq k <+\infty$ and $\mathbb{N}_{+\infty}:=\mathbb{N}_+$. 
    
     For a given $f$-invariant set $\Lambda,$ a splitting $X=E_1(x)\oplus\cdots\oplus E_i(x)\oplus F_i(x)$ on $\Lambda$ is called an {\em $\mathcal{A}$-invariant splitting}, if 
    $A(x)E_j(x)=E_j(fx)$ and $A(x)F_i(x)\subset F_i(fx)$ for every $x\in \Lambda, j=1,\cdots,i.$
    
   We now state a version of  multiplicative ergodic theorem used in this paper. 
\begin{theorem}[\cite{Thieullen87}]
Let $f$ be a $C^r$$(r>1)$ diffeomorphism of a compact Riemannian manifold $M$, preserving  an ergodic measure $\mu$ , and let $A:M\to L(X)$  be a H\"older continuous map such that $A(x)$ is injective for every $x\in M$. If $\mathcal{A}$ is quasi-compact with respect to $\mu$, then there exists an
	$f$-invariant set $\mathcal{R}^\mathcal{A}\subset M$ with $\mu(\mathcal{R}^\mathcal{A})=1$ such that for every $x\in \mathcal{R}^\mathcal{A}$:
	\begin{enumerate}
		\item There exist  $k_0\leq+\infty$, numbers  $\lambda(\mathcal{A},\mu)=\lambda_{1}>\lambda_{2}>\cdots >\kappa(\mathcal{A},\mu) $, finite dimensional subspaces $E_1(x),E_2(x),\cdots$ and infinite dimensional closed subspaces $F_1(x),F_2(x),\cdots$ indexed in $\mathbb{N}_{k_0}$.
		\item For any $i\in \mathbb{N}_{k_0}$, there exists an  $\mathcal{A}$-invariant splitting on $\mathcal{R}^\mathcal{A}$:
		\[X=E_1(x)\oplus\cdots\oplus E_i(x)\oplus F_i(x).\]
		\item For any $ i\in \mathbb{N}_{k_0}, \lim\limits_{n\rightarrow \pm \infty} \frac{1}{n}\log\frac{\|\mathcal{A}_x^n(u)\|}{\|u\|} = \lambda_{i},\ \forall u\in E_{i}(x)\setminus \{0\}$; \\
		$\lim\limits_{n\rightarrow +\infty} \frac{1}{n}\log\|\mathcal{A}_x^n|_{F_i(x)}\| = \lambda_{i+1},\ $ where $\lambda_{{k_0}+1}:=\kappa(\mathcal{A},\mu)$ if $k_0<+\infty.$	
		\item  For any $i\in \mathbb{N}_{k_0}$, $E_i(x),F_i(x)$ are Borel measurable $\mu$-continuous and the norms of the projection operators $\pi^i_E(x),\pi^i_F(x)$ associated with  ${X}=\big(E_1(x)\oplus\cdots\oplus E_i(x)\big)\oplus F_i(x)$ are tempered, that is,
		\begin{equation}\label{2.1}
		\lim\limits_{n\to \pm\infty}\frac{1}{n}\log \|\pi^i_\tau(f^nx)\|=0,~~\ \forall \tau=E,F.
		\end{equation} 		
	\end{enumerate}
	
\end{theorem}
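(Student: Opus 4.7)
The plan is to adapt the classical Oseledets–Ruelle multiplicative ergodic theorem to the Banach setting, using the Hausdorff measure of noncompactness $\|\cdot\|_\kappa$ as the bookkeeping device that separates the finitely many ``fast'' directions (those with exponent strictly greater than $\kappa(\mathcal{A},\mu)$) from the infinite-dimensional ``slow'' tail. First, I would verify that both $\chi(\mathcal{A},\mu)$ and $\kappa(\mathcal{A},\mu)$ exist as asserted by applying Kingman's subadditive ergodic theorem to the log-norm and to $\log\|\mathcal{A}_x^n\|_\kappa$, using the sub-multiplicativity of each quantity. The quasi-compactness hypothesis $\chi>\kappa$ then supplies the spectral gap that makes the top Oseledets spaces finite-dimensional.

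Next I would build the Oseledets filtration from above by choosing, for each $\lambda\in(\kappa,\chi]$, the subspace
\[
V_\lambda(x)\;=\;\bigl\{u\in X:\ \limsup_{n\to +\infty}\tfrac1n\log\|\mathcal{A}_x^n u\|\le \lambda\bigr\},
\]
and showing that it is closed, $\mathcal{A}$-invariant, and of \emph{finite} codimension in $X$. The key lemma is that $\dim(X/V_\lambda(x))$ is bounded in terms of $\lambda$ and $\kappa$: given any $(d+1)$ linearly independent vectors whose exponents exceed $\lambda$, one uses the fact that $\|\mathcal{A}_x^n\|_\kappa$ grows at rate $\kappa<\lambda$, so a large enough John-ellipsoid/volume argument (carried out in a finite-dimensional section, as in Ruelle and Ma\~n\'e) forces a contradiction with the noncompactness bound. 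Iterating the construction of $V_{\lambda_i}(x)$ with $\lambda_i$ moving down through the distinct values of the exponent function yields the filtration $F_1(x)\supset F_2(x)\supset\cdots$, and the existence of the asymptotic limit $\lambda_{i+1}$ for $\|\mathcal{A}_x^n|_{F_i(x)}\|$ follows from the subadditive ergodic theorem applied to the restricted cocycle together with the gap $\lambda_i>\lambda_{i+1}$.

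The finite-dimensional complements $E_i(x)$ would then be produced by an equivariant graph-transform / backward-iterate limit: at a Lyapunov-regular point, the ``fast'' cone in some reference splitting is contracted forward, so its forward image under $\mathcal{A}(f^{-n}x,n)$ converges, as $n\to+\infty$, to a unique $\mathcal{A}$-invariant finite-dimensional subspace $E_i(x)$ transverse to $F_i(x)$; the rate of convergence is controlled by the gap $\lambda_i-\lambda_{i+1}>0$ and by temperedness of the ambient quantities. Injectivity of $A(x)$ is used crucially here to invert along the negative orbit and to guarantee that the limiting subspace has full dimension $\dim E_i(x)$. Once $E_i(x)$ and $F_i(x)$ are in place, temperedness of the projection norms $\pi_E^i,\pi_F^i$ is a standard Borel–Cantelli / Birkhoff argument: if $\tfrac1n\log\|\pi_\tau^i(f^n x)\|$ did not tend to $0$, one would contradict the $\mu$-integrability produced at each finite stage; and Borel measurability plus $\mu$-continuity follow from Lusin's theorem applied to the pointwise-measurable limits constructing $E_i,F_i$. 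The main obstacle, in my view, is the finite-codimension step for $V_\lambda(x)$, because in a Banach space one lacks the orthogonal projections and singular-value decomposition that make the Hilbert-space argument of Lian–Young go through; one must instead quantify the interaction between $\|\cdot\|$ and $\|\cdot\|_\kappa$ on arbitrary finite-dimensional sections, which is exactly where Thieullen's volume/area estimates for exterior powers in Banach spaces enter.
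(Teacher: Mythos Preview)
The paper does not prove this theorem at all: it is stated as a known result, attributed to Thieullen~\cite{Thieullen87} (building on Oseledets, Ruelle, and Ma\~n\'e), and used as a black box throughout the rest of the paper. So there is no ``paper's own proof'' to compare against; the authors simply quote the statement and proceed.

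Your outline is a reasonable high-level sketch of how the argument in the cited references actually goes: Kingman for the existence of $\chi$ and $\kappa$, the filtration $V_\lambda(x)$ defined by exponential decay rates, the finite-codimension step powered by the gap $\lambda>\kappa(\mathcal{A},\mu)$ and the noncompactness measure, and then the construction of the finite-dimensional $E_i$ by a backward cone/graph argument using injectivity. You have also correctly located the genuinely hard step---proving $\mathrm{codim}\,V_\lambda(x)<\infty$ in a Banach space without singular values or orthogonal projections---and correctly pointed to Thieullen's exterior-power/volume estimates as the tool that replaces the Hilbert-space machinery. One small caution: your description of temperedness of $\|\pi_\tau^i\|$ as ``a standard Borel--Cantelli / Birkhoff argument'' using ``$\mu$-integrability produced at each finite stage'' is too optimistic; in this generality one does not have $\log\|\pi_\tau^i\|\in L^1(\mu)$ a priori, and the usual route is instead a subexponential-cocycle argument (e.g., Ma\~n\'e's lemma that $(\log\phi(x)-\log\phi(fx))^+\in L^1$ implies $\phi$ is tempered), which is in fact exactly how the present paper handles the analogous step in its Lemma~\ref{lemma 2.2}.
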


   The numbers $\lambda_{1}>\lambda_{2}>\cdots$	indexed in $\mathbb{N}_{k_0}$ are called the { \em Lyapunov exponents of $\mathcal{A}$ with respect to $\mu,$} and the decomposition ${X}=E_1(x)\oplus\cdots\oplus E_i(x)\oplus F_i(x)$ is called the {\em Oseledets decomposition}. Denote $d_j:=\dim(E_j).$
 
   
\subsection{Main result} Recall that an ergodic $f$-invariant measure is called a \emph{hyperbolic measure} if it has no zero Lyapunov exponents for $Df$. We now state the main result of this paper.
\begin{theorem}\label{thm A}
	Let $f$ be a $C^r$$(r>1)$ diffeomorphism of a compact Riemannian manifold $M$, preserving   an ergodic hyperbolic measure $\mu$  with $h_{\mu}(f)>0$. Assume  $A:M\to L(X)$  be a H\"older continuous map such that $A(x)$ is injective for every $x\in M$ and the generated cocycle $\mathcal{A}$ is quasi-compact with respect to $\mu$. Then for any $i\in\mathbb{N}_{k_0},$ there exists $\varepsilon_i>0$, such that for any $0<\varepsilon<\varepsilon_i,$ there exists a hyperbolic horseshoe $\Lambda$ satisfying the following properties:
	\begin{enumerate}
		\item   $|h_{\text {\em top}}(f|_{\Lambda})-h_\mu(f)|<\varepsilon.$
		\item   $\Lambda$ is $\varepsilon$-close to $\text{supp}(\mu)$ in the Hausdorff distance. 
		\item   For any $f$-invariant probability measure $\nu$ supported in $\Lambda$, $D(\mu,\nu)<\varepsilon$.
		\item   There exist $m\in \mathbb{N}$ and a continuous $\mathcal{A}$-invariant splitting of $X$ on $\Lambda$ 
		\[X=E_1(x)\oplus\cdots\oplus E_i(x)\oplus F_i(x),\]
		with~$\dim(E_j(x))=d_j,\forall 1 \leq j \leq i$, such that for any $x \in \Lambda, $ we have 
		\[e^{(\lambda_j-\varepsilon)m}\|u\|\leq \|\mathcal{A}_x^m(u)\|\leq e^{(\lambda_j+\varepsilon)m}\|u\|,\quad \forall u\in E_j(x), 1\leq j\leq i, \]
		\[\|\mathcal{A}_x^m(v)\|\leq e^{(\lambda_{i+1}+\varepsilon)m}\|v\|,\quad \forall v\in F_i(x). \]
	\end{enumerate}
\end{theorem}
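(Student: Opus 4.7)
The plan is to adapt Katok's classical shadowing construction of horseshoes for $(f,\mu)$ by performing it inside a Pesin-type regularity block on which the Oseledets data of the cocycle $\mathcal{A}$ are \emph{uniformly} controlled, and then to extend the Oseledets splitting by continuity to the resulting horseshoe $\Lambda$ and verify the sandwich estimates in~(iv) using quasi-compactness together with temperedness of the projections.

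First I would apply Theorem~2.1 to obtain, for the fixed index $i$, the $\mathcal{A}$-invariant measurable splitting $X=E_1(x)\oplus\cdots\oplus E_i(x)\oplus F_i(x)$ with exponents $\lambda_1>\cdots>\lambda_i>\lambda_{i+1}$ on a full-measure set $\mathcal{R}^{\mathcal{A}}$. Fix $0<\varepsilon<\varepsilon_i$ with $4\varepsilon<\min_{1\leq j\leq i}(\lambda_j-\lambda_{j+1})$. Combining the $\mu$-continuity of $x\mapsto E_j(x),F_i(x)$ and of the projections $\pi^i_E,\pi^i_F$ with Egorov's theorem applied to the limits defining the exponents and to the temperedness relation~(2.1), I would extract a compact set $\Gamma\subset\mathcal{R}^{\mathcal{A}}$ with $\mu(\Gamma)>1-\delta$ (where $\delta=\delta(\varepsilon)$ is chosen so that Birkhoff averages along $\Gamma$-returns are close to integrals) on which: (a) the bundles $E_j,F_i$ and the projections are continuous with uniformly bounded norm; (b) there is $n_0$ so that for all $x\in\Gamma$ and $n\geq n_0$,
\[ e^{n(\lambda_j-\varepsilon/2)}\|u\|\leq\|\mathcal{A}_x^n u\|\leq e^{n(\lambda_j+\varepsilon/2)}\|u\|,\ \ u\in E_j(x),\qquad \|\mathcal{A}_x^n|_{F_i(x)}\|\leq e^{n(\lambda_{i+1}+\varepsilon/2)}. \]
Refining $\Gamma$ further to a Pesin regularity block for $Df$ (hyperbolicity of $\mu$ is used here) and invoking the Brin--Katok entropy formula as in~\cite{Katok,katok1995,caozhao}, Katok's shadowing argument produces an integer $N$ and a hyperbolic horseshoe $\Lambda$ for $f^N$, consisting of true orbits that shadow $(N,\tau)$-pseudo-orbits of returns to $\Gamma$. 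By construction $\Lambda$ satisfies (i)--(iii), and every orbit of $\Lambda$ spends a fraction at least $1-\varepsilon$ of its time in a $\tau$-neighbourhood of $\Gamma$.

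To obtain (iv), I would extend the splitting from $\Gamma$ to $\Lambda$ by continuity: points of $\Lambda$ are limits of points whose orbits return to $\Gamma$ with arbitrarily high density, so $E_j$ (finite-dimensional, hence living on a Grassmannian) admits a unique continuous $\mathcal{A}$-invariant extension characterized dynamically as the subspace on which the growth rate equals $\lambda_j$; the gap $\lambda_j-\lambda_{j+1}>4\varepsilon$ guarantees uniqueness via a standard cone/graph-transform argument applied to $\mathcal{A}^N$. The infinite-dimensional part $F_i(x)$ is then defined as the kernel of the finite-rank projection $\pi^i_E(x)$, which extends continuously because $\pi^i_E$ does on $\Gamma$ and tempered growth forces the projections at shadowing points to be uniformly close in operator norm. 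Taking $m=kN$ with $k$ sufficiently large and controlling the visit times to $\Gamma$ along orbit segments of length $m$, the uniform estimates above combined with the bound on projection norms yield the required two-sided inequality on each $E_j$ and the upper bound on $F_i$.

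The main obstacle will be controlling the infinite-dimensional factor $F_i$: unlike the finite-dimensional subspaces $E_j$, one cannot appeal to compactness of a Grassmannian to pass from measurable to continuous, nor is $\mathcal{A}|_{F_i}$ invertible on a complement in a finite-codimensional sense. This step is where temperedness (2.1) of $\pi^i_F$ is essential (to propagate continuous closeness of $F_i$ at shadowing points), and where quasi-compactness $\lambda_{i+1}>\kappa(\mathcal{A},\mu)$ enters quantitatively to ensure that the uniform upper bound on $\|\mathcal{A}^m|_{F_i}\|$ survives the small error produced by the fraction $\leq\varepsilon$ of orbit time spent outside $\Gamma$. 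Once this extension and estimate are in place, combining with the Katok properties already established completes the verification of~(i)--(iv).
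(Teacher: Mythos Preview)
Your overall strategy---Katok horseshoe inside a joint Pesin block for $Df$ and $\mathcal{A}$, then a cone/graph-transform argument for the cocycle splitting on $\Lambda$---is the paper's strategy, and your treatment of $H_j=E_1\oplus\cdots\oplus E_j$ via forward cones is essentially correct. The genuine gap is exactly where you anticipate it, in the construction of $F_i$, but the resolution you propose does not work. Defining $F_i(y)$ as ``the kernel of the finite-rank projection $\pi^i_E(y)$'' is circular: a projection is determined by its range \emph{and} its kernel, so having extended $H_i(y)$ alone does not give you a projection operator. Your fix---extend $\pi^i_E$ to $\Lambda$ by continuity from $\Gamma$, invoking temperedness---fails for two reasons. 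First, temperedness~(2.1) controls the growth of $\|\pi^i_E(f^n x)\|$ along an orbit, not the spatial variation of $x\mapsto\pi^i_E(x)$. Second, for $y\in\Lambda$ and $0<k<m$ the point $f^k y$ is close to $f^k x_0$ for some pseudo-orbit point $x_0\in\Gamma$, but $f^k x_0$ is typically \emph{not} in $\Gamma$, so there is no continuous object on $\Gamma$ to extend.

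The paper obtains $F_j$ by the \emph{same} mechanism as $H_j$, a cone argument run backward. The key step (Lemma~3.1) proves forward contraction of the $U_j$-cones not just at aperture $\theta_0$ but over a whole range $\theta_0\le\theta\le\eta_0^{-1}\theta_0^{-1}$; taking set-theoretic complements via $V_j(x,\theta_0)=X\setminus U_j^{\circ}(x,\theta_0^{-1})$ and using injectivity of $A(x)$ then yields backward contraction $\mathcal{A}^{-m}_{f^m y}V_j(x_1,\theta_0)\subset V_j(x_0,\eta\theta_0)$. From this one shows that $\{\mathcal{A}^{-mn}_{f^{mn}y}F_j(x_n)\}_{n\ge1}$ is Cauchy in the Grassmannian $(\mathcal{G}(X),d_H)$, which is complete (Kato) even in infinite dimensions, so $F_j(y)$ is well-defined, $\mathcal{A}$-invariant, and continuous; the upper bound on $F_i$ then comes from a step-by-step estimate in the \emph{Lyapunov norm} $\|\cdot\|_{f^k x_0}$ along the pseudo-orbit, using H\"older continuity of $A$ together with the exponential decay of $d(f^k x_0,f^k y)$ provided by the rectangle geometry. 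Two smaller corrections: quasi-compactness is used only to invoke the multiplicative ergodic theorem and plays no further quantitative role; and there is no visit-time bookkeeping---the Lyapunov norm (Lemma~2.2, with $K(f^k x)\le e^{\varepsilon k}K(x)$) absorbs the intermediate steps directly, so $m$ is the return time itself rather than a large multiple of $N$.
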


\subsection{ Lyapunov norm for cocycles}
   Fix any $i\in \mathbb{N}_{k_0}$, $\varepsilon>0$ and $x\in \mathcal{R}^\mathcal{A}$.  We define the \emph{Lyapunov norm for $\mathcal{A}$} $\|\cdot\|_x=\|\cdot\|_{x,i,\varepsilon}$ on ${X}$ as follows: \\ 	
   For any  $u=u_1+\cdots+u_{i+1}\in {X},$ where $u_j\in E_j(x),~ \forall j=1,\cdots ,i, $ and $ u_{i+1}\in F_i(x)$, we define
   \begin{equation}\label{2.2}
   \|u\|_x:=\sum_{j=1}^{i+1} \|u_j\|_x,
   \end{equation}
   where 
   \[\|u_j\|_x=\sum_{n=-\infty}^{+\infty}\|\mathcal{A}_x^n(u_j) \|\cdot e^{-\lambda_j n-\varepsilon |n|}, \quad\forall j=1,\cdots ,i;\] 
   and 
   \begin{equation}\label{2.3}
   \|u_{i+1}\|_x=\sum_{n=0}^{+\infty}\|\mathcal{A}_x^n(u_{i+1}) \|\cdot e^{-(\lambda_{i+1}+\varepsilon)n}.
   \end{equation}
 Then the following lemma holds.
\begin{lemma}\label{lemma 2.2}
	Let $f, \mu$ and $A$ be as in Theorem \ref{thm A}. Then for any  $i\in \mathbb{N}_{k_0}$, $\varepsilon>0$, the Lyapunov norm $\|\cdot\|_x=\|\cdot\|_{x,i,\varepsilon}$ satisfies the following properties.
	\item 	\text{\bf(\romannumeral1)} For any $x\in \mathcal{R}^\mathcal{A},$ we have
	\begin{equation}\label{2.4}
	e^{\lambda_j-\varepsilon} \|u_j\|_x  \leq \|{A}(x)u_j\|_{fx} \leq e^{\lambda_j+\varepsilon}\|u_j\|_x,\quad \forall u_j\in E_j(x),~~j=1,\cdots,i,
	\end{equation}  
	\begin{equation}\label{2.5}
	\|{A}(x)u_{i+1}\|_{fx} \leq e^{\lambda_{i+1}+\varepsilon} \|u_{i+1}\|_x,\quad \forall u_{i+1}\in F_i(x).
	\end{equation} 
	\item	\text{\bf(\romannumeral2)} There exists an $f$-invariant subset of $\mathcal{R}^\mathcal{A}$ with $\mu$-full measure (we may also denote it by $\mathcal{R}^\mathcal{A}$), and a measurable function $K(x)=K_{\varepsilon,i}(x)$ defined on $\mathcal{R}^\mathcal{A}$ such that  for any $x\in \mathcal{R}^\mathcal{A}$, $u\in {X},$  we have
	\begin{equation}\label{2.6}
	\|u\|\leq \|u\|_x\leq K(x)\|u\|,
	\end{equation}
	\begin{equation}\label{2.7}
	K(x)e^{-\varepsilon}\leq K(fx)\leq K(x)e^{\varepsilon}.
	\end{equation}
\end{lemma}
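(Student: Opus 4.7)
}

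For part (i), the strategy is a direct index-shift computation using $\mathcal{A}$-invariance. For $u_j\in E_j(x)$ with $j\le i$, we have $A(x)u_j\in E_j(fx)$, so by definition
$\|A(x)u_j\|_{fx}=\sum_{n\in\mathbb Z}\|\mathcal A_{fx}^n(A(x)u_j)\|\,e^{-\lambda_j n-\varepsilon|n|}$.
Using the cocycle identity $\mathcal A_{fx}^{n}\circ A(x)=\mathcal A_x^{n+1}$ and substituting $m=n+1$, the exponential factor becomes $e^{\lambda_j}\cdot e^{-\lambda_j m-\varepsilon|m-1|}$, and the elementary bounds $|m|-1\le|m-1|\le|m|+1$ yield exactly (\ref{2.4}). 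The estimate (\ref{2.5}) for $F_i(x)$ is analogous, with only the one-sided tail $n\ge 0$ and a single bound (an index shift moves the sum from $m=0,1,\dots$ to $m=1,2,\dots$ and throws away the $m=0$ term, losing only a factor).

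For part (ii), the lower bound $\|u\|\le\|u\|_x$ follows immediately by isolating the $n=0$ term in each summand of (\ref{2.2})--(\ref{2.3}), which gives $\|u_j\|\le\|u_j\|_x$, and then applying the triangle inequality to $u=\sum_j u_j$. The main work is the existence of a finite comparison constant. I would first fix $x$ in the Oseledets set $\mathcal R^{\mathcal A}$. Since each $E_j(x)$ is finite-dimensional and invariant under $\mathcal A$ with exponent $\lambda_j$, the operator norm $\|\mathcal A_x^n|_{E_j(x)}\|$ realizes the exponent uniformly in direction, so for any $\delta=\varepsilon/2$ there is a measurable constant $C_j(x)$ with
\[
\|\mathcal A_x^n(u_j)\|\le C_j(x)\,e^{\lambda_j n+\delta|n|}\|u_j\|,\qquad n\in\mathbb Z,\ u_j\in E_j(x),
\]
and the same one-sided bound holds on $F_i(x)$ for $n\ge 0$. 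Inserting these into (\ref{2.2})--(\ref{2.3}) yields a geometric series of ratio $e^{-\varepsilon/2}$, hence $\|u_j\|_x\le C_j'(x)\,\|u_j\|$. Combining with the projection bounds $\|u_j\|\le\|\pi_E^j(x)\|\,\|u\|$ and $\|u_{i+1}\|\le\|\pi_F^i(x)\|\,\|u\|$ from Theorem 2.1(4) produces a measurable function $K_0(x)$ with $\|u\|_x\le K_0(x)\|u\|$.

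Finally, to upgrade $K_0$ to the function $K$ satisfying (\ref{2.7}), I would apply the standard temperedness regularization
\[
K(x):=\sup_{n\in\mathbb Z}K_0(f^n x)\,e^{-\varepsilon|n|}.
\]
Since $K_0$ is built from the tempered projection norms and from constants $C_j(x)$ arising in the Oseledets theorem, $\tfrac1n\log K_0(f^n x)\to 0$ along an $f$-invariant full-measure subset, so the supremum is finite there; we may shrink $\mathcal R^{\mathcal A}$ to this set. The bound $K_0\le K$ preserves (\ref{2.6}), while the elementary estimate $|m-1|\le |m|+1$ (and $\ge|m|-1$) used on the shifted supremum gives $K(x)e^{-\varepsilon}\le K(fx)\le K(x)e^{\varepsilon}$, which is (\ref{2.7}). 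The main technical obstacle is establishing the sub-exponential growth of $K_0$ along orbits: this needs the Oseledets constants $C_j(x)$ to be tempered, which in this Banach-space setting requires invoking Thieullen's version of the multiplicative ergodic theorem (in particular the temperedness of the projection norms in (\ref{2.1})) rather than treating things purely combinatorially.
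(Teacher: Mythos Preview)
Your plan matches the paper's proof in structure: the index shift for (i), the $n=0$ term plus triangle inequality for the lower bound in (ii), the sup-constants $C_j(x)$ (called $M_j(x)$ in the paper) combined with the projection norms for the upper bound, and a standard regularization to obtain (\ref{2.7}). The paper uses the sum $K(x)=\sum_{n\in\mathbb Z} M(f^nx)e^{-\varepsilon|n|}$ rather than your sup, but that difference is cosmetic.

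The one place where your plan is too optimistic is the temperedness of $C_j(x)$. You write that this follows from Thieullen's theorem via (\ref{2.1}), but (\ref{2.1}) only asserts temperedness of the \emph{projection} norms $\|\pi_E^j(f^nx)\|$, $\|\pi_F^j(f^nx)\|$; it says nothing directly about
\[
C_j(x)=\sup_{n,\,u_j}\frac{\|\mathcal A_x^n u_j\|}{e^{\lambda_j n+\frac{\varepsilon}{2}|n|}\|u_j\|}.
\]
The paper closes this gap with a separate, short argument: it checks that $(\log M_j(x)-\log M_j(fx))^+$ is bounded by a constant depending only on $\max_{x\in M}\|A(x)\|$ and the exponents, and then invokes Ma\~n\'e's temperedness lemma \cite[Lemma III.8]{mane1983} to conclude $\tfrac1n\log M_j(f^nx)\to 0$ on an $f$-invariant set of full measure. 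This is exactly the ``combinatorial'' step you were hoping to bypass, and it is genuinely needed here since the Banach-space multiplicative ergodic theorem as stated in the paper does not already package the temperedness of these auxiliary constants.
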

\begin{proof}
	{\bf(\romannumeral1)} We will prove the inequality 
	$$ \|{A}(x)u_j\|_{fx} \leq e^{\lambda_j+\varepsilon}\|u_j\|_x,\quad \forall u_j\in E_j(x),~~j=1,\cdots,i,$$
	the others can be proved analogously.  By the definition, we have
	\begin{align*}
	\|{A}(x)u_j\|_{fx}  & =  \sum_{n=-\infty}^{+\infty}\|\mathcal{A}_{fx}^n(\mathcal{A}_x^1(u_j)) \|\cdot e^{-\lambda_j n-\varepsilon |n|}\\	
	& =  \sum_{n=-\infty}^{+\infty}\|\mathcal{A}_x^{n+1}(u_j) \|\cdot e^{-\lambda_j (n+1)-\varepsilon |n+1|}\cdot  e^{\lambda_j +\varepsilon (|n+1|-|n|)}\\
	& \leq  e^{\lambda_j+\varepsilon}\cdot \|u_j\|_x. 
	\end{align*}
	{\bf(\romannumeral2)}  For any $u=u_1+\cdots+u_{i+1}\in {X}$, by the definition,
	$$	\|u\|\leq \|u_1\|+\cdots+\|u_{i+1}\|\leq \|u_1\|_x+\cdots+\|u_{i+1}\|_x=\|u\|_x. $$                  
	This estimates the lower bound. To estimate the upper bound, we define
	\[M_{j}(x):=\sup \left\{\frac{\|\mathcal{A}_x^n(u_j)\|}{e^{\lambda_j n+\frac{1}{2}\varepsilon|n|}\cdot\|u_j\|}: ~u_j\in E_j(x),n\in \mathbb{Z}   \right\},\quad \forall j=1,\cdots,i, \]
	\[ M_{i+1}(x)
	:=  \sup \left\{\frac{\|\mathcal{A}_x^n(u_{i+1})\|}{e^{(\lambda_{i+1}+\frac{1}{2}\varepsilon)n}\cdot\|u_{i+1}\|}:~u_{i+1}\in F_i(x),n\geq 0 \right\}.\]	  
	Then	
	\begin{equation}\label{2.8}
	\begin{split}
	&\|u\|_x \leq \sum_{j=1}^{i}\sum_{n=-\infty}^{+\infty}M_{j}(x)e^{-\frac{1}{2}\varepsilon |n|}\cdot \|u_j\| +
	\sum_{n=0}^{+\infty}M_{i+1}(x)e^{-\frac{1}{2}\varepsilon n}\cdot\|u_{i+1}\| \\
	& \leq c_0 \cdot \Big(\sum_{j=1}^{i}M_{j}(x)\cdot\|\pi^{j}_{E}(x)-\pi^{j-1}_{E}(x)\|\cdot\|u\|+M_{i+1}(x)\cdot\|\pi^{i}_{F}(x)\|\cdot\|u\|\Big)\\
	& =: M(x)\cdot\|u\|, 
	\end{split}
	\end{equation}
	where $c_0=\sum_{n=-\infty}^{+\infty}e^{-\frac{1}{2}\varepsilon |n|}$ and $\pi^{0}_{E}(x)=0$.
	\begin{claim}
		$M(x)$ is tempered on an $f$-invariant   subset of $\mathcal{R}^\mathcal{A}$ with $\mu$-full measure, that is
		\[\lim\limits_{n\to\pm\infty}\frac{1}{n}\log {M}(f^nx)=0,\quad \text{for}~ \mu\text{-a.e.} ~x\in \mathcal{R}^\mathcal{A}. \]
	\end{claim}
	\begin{proof}
		Since $\|\pi^j_{E}(x)\|$,$\|\pi^j_{F}(x)\|$ are tempered by \eqref{2.1}, it's enough to prove $M_{j}(x)$ is tempered for any $ 1\leq j\leq i+1$. Since 
		\begin{align*}
		   & M_{i+1}(x) \\
	     = &\underset{ \substack{ u_{i+1}\in F_{i+1}(x)\\ 
	     		n\geq 1
	     	} 
	     }{\sup} \left\{\frac{\|\mathcal{A}_{fx}^{n-1}( \mathcal{A}_x(u_{i+1}))\|}{e^{\lambda_{i+1} (n-1)+\frac{1}{2}\varepsilon|n-1|}\cdot\|\mathcal{A}_x(u_{i+1})\|}\cdot\frac{\|\mathcal{A}_x(u_{i+1})\|}{e^{\lambda_{i+1}+\frac{\varepsilon}{2}(|n|-|n-1|)}\cdot\|u_{i+1}\|},~1 \right\}\\
	  \leq & \max\{c_1\cdot M_{i+1}(fx),1 \},
		\end{align*}
		where $c_1=\max\limits_{x\in X}\frac{\|\mathcal{A}_x\|}{e^{\lambda_{i+1}-\frac{1}{2}\varepsilon}}$,
		we obtain 
		$$\big(\log M_{i+1}(x)-\log M_{i+1}(fx)\big)^+ \leq \log^+ \max\{c_1,\frac{1}{ M_{i+1}(fx)} \}\leq\log^+ \max\{c_1,1\},$$ 
		then we conclude $M_{i+1 }(x)$ is tempered on a subset of  full measure by \cite[Lemma \uppercase\expandafter{\romannumeral3}.8]{mane1983}. The result for  $ 1\leq j\leq i$ is obtained similarly.	
	\end{proof} 
	Let
	\[K(x):= \sum_{n\in\mathbb{Z}} {M}(f^nx)e^{-\varepsilon|n|}, \quad \forall x\in\mathcal{R}^\mathcal{A}, \]
	then by \cite[Lemma 3.5.7]{barreira000pesin}, $K(x)$ satisfies \eqref{2.6} and \eqref{2.7}. This completes the proof.		
\end{proof}
 	
   Fix any  $i\in \mathbb{N}_{k_0}$. Then by Lusin's theorem, for any $\delta>0,$ there exists a compact subset $\mathcal{R}^\mathcal{A}_{\delta}\subset \mathcal{R}^\mathcal{A}$ such that $\mu(\mathcal{R}^\mathcal{A}_{\delta})>1-\delta$ and $K(x)$ is continuous on $\mathcal{R}^\mathcal{A}_\delta$. Denote \begin{equation}\label{2.9}
   l=l_\delta=\sup\limits\{K(x):x\in\mathcal{R}^\mathcal{A}_{\delta}\}.
   \end{equation}

   Since the Oseledets decomposition is $\mu$-continuous, we may assume   the Oseledets decomposition is continuous on  $\mathcal{R}^\mathcal{A}_{\delta}$.
   
   Let $\widetilde{{X}}$ be the collection of norms on ${X}$ which are equivalent to $\|\cdot\|$, 
   then $\widetilde{{X}}$ is a metric space with respect to the metric
   \[\widetilde{D}(\varphi,\psi)=\sup\limits_{u\in {X}\setminus\{0\}}\frac{|\varphi(u)-\psi(u)|}{\|u\|},\quad \forall \varphi,\psi\in\widetilde{{X}}.  \]
   We claim that the function  $x\mapsto \|\cdot\|_x$   is continuous on $\mathcal{R}^\mathcal{A}_{\delta}$. Indeed,  for any $x\in \mathcal{R}^\mathcal{A}_{\delta},$  $u=u_1+\cdots+u_{i+1}\in {X},$ where $u_j\in E_j(x),~ \forall j=1,\cdots ,i, $ and $ u_{i+1}\in F_i(x)$, we define
   \[\|u\|_{k,x}:=\sum_{j=1}^{i}\sum_{n=-k}^{k}\|\mathcal{A}_x^n(u_{j}) \|\cdot e^{-\lambda_j n-\varepsilon |n|}+\sum_{n=0}^{k}\|\mathcal{A}_x^n(u_{i+1}) \|\cdot e^{-(\lambda_{i+1}+\varepsilon)n},\]
   then by \eqref{2.8}, 
   \begin{align*}
 	|\|u\|_x-\|u\|_{k,x}| & \leq \sum_{|n|>k}e^{-\frac{1}{2}\varepsilon |n|}\cdot M(x)\cdot\|u\|                      \\
	                      & \leq \frac{2e^{-\frac{\varepsilon}{2}k}}{e^{\frac{\varepsilon}{2}}-1}\cdot K(x)\cdot\|u\| \\
	                      & \leq \frac{2l\cdot e^{-\frac{\varepsilon}{2}k}}{ (e^{\frac{\varepsilon}{2}}-1)}\cdot\|u\|,
   \end{align*}
   that is,
   \[\widetilde{D}\big(\|\cdot\|_x,~\|\cdot\|_{k,x}\big)\leq \frac{2l\cdot e^{-\frac{\varepsilon}{2}k}}{ (e^{\frac{\varepsilon}{2}}-1)}, \]
     which implies $\|\cdot\|_{k,x}\to \|\cdot\|_x$ uniformly for $x\in \mathcal{R}^\mathcal{A}_{\delta}$   as $k\to +\infty.$ Since $x\mapsto \|\cdot\|_{k,x}$ is continuous,  we have $x\mapsto \|\cdot\|_x$   is continuous on $\mathcal{R}^\mathcal{A}_{\delta}$. This proves the claim.
   
\subsection{Regular neighborhoods}
   Let $f$ be a  $C^r \, (r >1) \, $ diffeomorphism of a compact Riemannian $d$-dimensional manifold $M$, preserving  an  ergodic hyperbolic measure $\mu$. Then by Oseledets multiplicative ergodic theorem \cite{Oseledets1968}, there exists an $f$-invariant set $\mathcal{R}^{Df}$ with $\mu$-full measure, a number $\chi>0$ and a $Df$-invariant decomposition $TM=E^u\oplus E^s$ on $\mathcal{R}^{Df}$ such that for any $x\in\mathcal{R}^{Df}$, we  have
   \[\lim\limits_{n\rightarrow \pm \infty} \frac{1}{n}\log\|D_xf^n(u)\| > \chi,\ \forall u\in E^u(x)\setminus \{0\},\]
   \[\lim\limits_{n\rightarrow \pm \infty} \frac{1}{n}\log\|D_xf^n(v)\| < -\chi,\ \forall v\in E^s(x)\setminus \{0\}.\]
   
  Denote $d_u=\dim(E^u), d_s=\dim(E^s)$,  and denote by $B(0,r)$ the standard Euclidean $r$-ball in $\mathbb{R}^d$ centered at $0.$ We now introduce some properties of regular neighborhoods, see \cite{pesin76} for the proofs.
   \begin{theorem}[Pesin]\label{thm 2.3}
   	   	Let $f$ be a $C^r(r>1)$ diffeomorphism of a compact Riemannian manifold $M$, $\mu$ be an ergodic hyperbolic measure. Then for any $\varepsilon>0,$  the following properties hold.
   	   	\begin{enumerate}
   	   		\item  There exists a measurable function $l^\prime:\mathcal{R}^{Df}\to [1,\infty)$  and a collection of embeddings $\Psi_x:B(0,l^\prime(x)^{-1})\to M$ for $x\in \mathcal{R}^{Df}$ such that $\Psi_x(0)=x$, $l^\prime(x)e^{-\varepsilon}\leq l^\prime(fx)\leq l^\prime(x)e^{\varepsilon}$, and the preimages $\widetilde{E}^j(x)=(D_0\Psi_x)^{-1}E^j(x)$ are orthogonal in $\mathbb{R}^d$, where $j=u,s$.
   	   		\item  If $\widetilde{f_x}=\Psi_{fx}^{-1}\circ f\circ \Psi_x:B(0,l^\prime(x)^{-1})\to M,$ then there exist $A_u\in GL(d_u,\mathbb{R})$ and $A_s\in GL(d_s,\mathbb{R})$ such that $D_0(\widetilde{f_x})=\text{\em diag}(A_u,A_s)$ and
   	   		\[ \|A_u^{-1} \|^{-1}\geq e^{\chi-\varepsilon},~\|A_s\|\leq e^{-\chi+\varepsilon}. \]
   	   		\item  For any $a,b\in B(0,l^\prime(x)^{-1})$,
   	   		\[\|D_a(\widetilde{f_x})-D_b(\widetilde{f_x}) \|, \|D_a(\widetilde{f_x}^{-1})-D_b(\widetilde{f_x}^{-1}) \|\leq l^\prime(x)|a-b|^{r-1}.  \]
   	   		\item  There exists a constant $0<c_1<1$  such that 
   	   		\[\|D(\Psi_x)\|\leq c_1^{-1}, \|D(\Psi_x^{-1})\|\leq l^\prime(x).  \]
   	   		So for any $a,b\in B(0,l^\prime(x)^{-1})$,
   	   		\[c_1\cdot d(\Psi_x(a),\Psi_x(b))\leq |a-b|\leq l^\prime(x)\cdot d(\Psi_x(a),\Psi_x(b)).  \]
   	   	\end{enumerate} 
   \end{theorem}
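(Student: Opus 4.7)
The plan is Pesin's classical construction of Lyapunov (or regular) charts. I write $\Psi_x := \exp_x\circ T_x$ where $\exp_x$ is the Riemannian exponential map at $x$ and $T_x\colon\mathbb{R}^d\to T_xM$ is a linear isomorphism chosen to be an isometry from the standard Euclidean inner product on $\mathbb{R}^{d_u}\oplus\mathbb{R}^{d_s}$ to $T_xM$ equipped with an adapted \emph{Lyapunov inner product} in which $E^u(x)$ and $E^s(x)$ are declared orthogonal. With this choice $\widetilde{E}^u(x)=\mathbb{R}^{d_u}\times\{0\}$ and $\widetilde{E}^s(x)=\{0\}\times\mathbb{R}^{d_s}$ are automatically orthogonal, giving (1), and $D_0\widetilde{f}_x = T_{fx}^{-1}\circ D_xf\circ T_x$ is block diagonal in this splitting.

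The Lyapunov inner product is defined fibrewise on $E^u(x)$ by
\begin{equation*}
\langle u,v\rangle'_x := \sum_{n\geq 0}\langle D_xf^{-n}u,D_xf^{-n}v\rangle\,e^{2n(\chi-\varepsilon')},
\end{equation*}
and symmetrically on $E^s(x)$ with $D_xf^{n}$ replacing $D_xf^{-n}$, for some $0<\varepsilon'<\varepsilon$; convergence holds on $\mathcal{R}^{Df}$ by Oseledets. Extending across the splitting by declaring $E^u(x)\perp E^s(x)$ and telescoping the series yields $\|D_xf\,u\|'_{fx}\geq e^{\chi-\varepsilon'}\|u\|'_x$ for $u\in E^u(x)$ and $\|D_xf\,v\|'_{fx}\leq e^{-(\chi-\varepsilon')}\|v\|'_x$ for $v\in E^s(x)$. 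Since $T_x$ and $T_{fx}$ are Lyapunov isometries, these bounds transfer verbatim to the Euclidean operator norms of the two diagonal blocks $A_u,A_s$ of $D_0\widetilde{f}_x$, yielding (2) after taking $\varepsilon'$ small enough.

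To obtain (3) and (4) I set
\begin{equation*}
l'(x) := C_0\bigl(\|T_x^{-1}\|_{\mathrm{op}}+\|T_{fx}^{-1}\|_{\mathrm{op}}\bigr),
\end{equation*}
where the operator norms are with respect to the Riemannian metric and $C_0$ is a large universal constant depending only on the injectivity radius of $M$ and an upper bound on the $C^r$-norm of $f$. Because $\|u\|\leq\|u\|'_x$ from the $n=0$ term, $\|T_x\|_{\mathrm{op}}\leq 1$, so $l'(x)\geq 1$, $\|D_0\Psi_x\|=\|T_x\|\leq 1$, and $\|D_0\Psi_x^{-1}\|=\|T_x^{-1}\|\leq l'(x)$; uniform smoothness of $\exp_x$ propagates these bounds from $0$ to the whole ball $B(0,l'(x)^{-1})$ with constants absorbed into $C_0$, giving (4). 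For (3), write $\widetilde{f}_x=T_{fx}^{-1}\circ\exp_{fx}^{-1}\circ f\circ\exp_x\circ T_x$: the middle composition $\exp_{fx}^{-1}\circ f\circ\exp_x$ has uniformly bounded $C^r$-norm by compactness of $M$, so its differential is $(r-1)$-H\"older with a universal constant, and conjugation by $T_x$, $T_{fx}^{-1}$ introduces a multiplicative factor $\leq\|T_{fx}^{-1}\|\cdot\|T_x\|^r\leq l'(x)/C_0$; the analogous estimate for $\widetilde{f}_x^{-1}$ uses that $f^{-1}$ is also $C^r$.

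The main obstacle is the tempered bound $l'(fx)\leq e^{\varepsilon}l'(x)$, which reduces to temperedness of $x\mapsto\|T_x^{-1}\|_{\mathrm{op}}$. Two quantities must be controlled: (i) the ratio between $\|\cdot\|'_x$ and the Riemannian norm on each of $E^u(x)$, $E^s(x)$; and (ii) the inverse of the Riemannian angle $\angle(E^u(x),E^s(x))$, since the Lyapunov inner product declares this angle to be $\pi/2$ while the ambient metric does not. For (i) one runs a sub-additive estimate on the defining series exactly analogous to the tempered-function argument in Lemma~\ref{lemma 2.2}(ii), invoking Ma\~n\'e's tempered-sequence lemma on the $\log^+$ of the one-step ratio. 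For (ii) temperedness of the angle follows from temperedness of the Oseledets projectors onto $E^u(x)$ and $E^s(x)$, which is part of Oseledets' theorem itself. The freedom to choose $\varepsilon'<\varepsilon$ absorbs the finitely many multiplicative slacks and yields the sharp estimate $l'(fx)\leq e^\varepsilon l'(x)$, completing the construction.
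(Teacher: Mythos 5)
The paper does not prove this theorem; it is quoted from Pesin's 1976 paper (the text says ``see \cite{pesin76} for the proofs''), so there is no internal argument to compare against. Your sketch is the standard Lyapunov-chart construction and is essentially correct: the weighted series defining the Lyapunov inner product, the telescoping that yields the block bounds $\|A_u^{-1}\|^{-1}\geq e^{\chi-\varepsilon}$ and $\|A_s\|\leq e^{-\chi+\varepsilon}$, the conjugation estimate for the H\"older bound on $D\widetilde{f_x}$, and the reduction of temperedness of $\|T_x^{-1}\|$ to temperedness of the norm ratios and of the Oseledets projectors are all the canonical steps. Two points are stated too quickly. First, the pointwise inequalities $e^{-\varepsilon}l'(x)\leq l'(fx)\leq e^{\varepsilon}l'(x)$ do not follow merely from $\lim_n \frac1n\log l'(f^nx)=0$; you need to replace your candidate $l'$ by a regularized majorant via the tempering-kernel lemma (exactly the device $K(x)=\sum_n M(f^nx)e^{-\varepsilon|n|}$ used in Lemma~\ref{lemma 2.2}), and only afterwards verify that properties (3)--(4) survive the enlargement of $l'$. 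Second, you should check that $\widetilde{f_x}$ is actually defined on all of $B(0,l'(x)^{-1})$, i.e.\ that $f(\Psi_x(B(0,l'(x)^{-1})))$ lands in the domain of $\Psi_{fx}^{-1}$; this is where the constant $C_0$ and the tempered relation between $l'(x)$ and $l'(fx)$ are genuinely used. Neither issue affects the architecture of the argument.
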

   The set $\mathcal{N}(x):=\Psi_x(B(0,l^\prime(x)^{-1}))$ is called a {\em regular neighborhood} of $x$. Let $r(x)$ be the radius of maximal ball contained in $\mathcal{N}(x)$, then Theorem \ref{thm 2.3} implies $r(x)\geq l^\prime(x)^{-2}.$ By Lusin's theorem, for any $\delta>0,$ there exists a compact subset $\mathcal{R}^{Df}_{\delta}\subset \mathcal{R}^{Df}$ with  $\mu(\mathcal{R}^{Df}_{\delta})>1-\delta$ such that  $x\mapsto \Psi_x$, $l^\prime(x)$ and the Oseledets splitting  $T_xM=E_x^u\oplus E_x^s$  vary continuously on $\mathcal{R}^{Df}_\delta$. Denote $$l^\prime=l^\prime_\delta=\max\limits\{l^\prime(x):x\in\mathcal{R}^{Df}_{\delta}\}.$$
   
   For $x\in\mathcal{R}^{Df}$, using chart $\Psi_x,$ we can trivialize the tangent bundle over $\mathcal{N}(x)$ by identifying $T_{\mathcal{N}(x)}M\equiv\mathcal{N}(x)\times\mathbb{R}^d.$ For any $y\in \mathcal{N}(x), u\in T_yM,$ we can use the identification to {\em translate} the vector u to a corresponding vector $\bar{u}\in \mathbb{R}^d,$ that is, $\bar{u}=D_y(\Psi_x^{-1})u$.  We then define  
   $$\|u\|_y^\prime:=\|D_y(\Psi_x^{-1})u\|.$$
   This defines a  norm on $T_{\mathcal{N}(x)}M$.   For $v\in T_xM,$  Let $v_y\in T_yM$ be the vector translated from v, that is, $v_y=D_a(\Psi_x)D_x(\Psi_x^{-1})v$, where $a=\Psi_x^{-1}(y)$. Then $\|v\|^\prime_x=\|v_y\|^\prime_y$. 
   Thus without confusion, we may identify
   \begin{equation}\label{2.9.1}
     \|v\|^\prime_x=\|D_x(\Psi_x^{-1})v\|=\|v\|^\prime_y.
   \end{equation}
       By translating the splitting $T_xM=E^u(x)\oplus E^s(x)$, we define a new splitting $T_yM=E^u(y)\oplus E^s(y)$, where $E^u(y)=D_{a}(\Psi_x)\widetilde{E}^u(x)$ and $a=\Psi_x^{-1}(y)$(and similarly for $E^s(y)$).
   
    Using the identification $T_{\mathcal{N}(x)}M\equiv\mathcal{N}(x)\times\mathbb{R}^d,$ without confusion, we also identify $D_yf=D_{b}(\widetilde{f_x})$( where $b=\Psi_{fx}^{-1}(fy)$) and identify $T_yM=E^u(y)\oplus E^s(y)$ with $\mathbb{R}^d=\widetilde{E}^u(x)\oplus \widetilde{E}^s(x)$. Then by Theorem \ref{thm 2.3}, 
   \begin{equation}\label{2.10}
      \|D_xf(u)\|^\prime_{fx}\geq e^{\chi-\varepsilon}\|u\|^\prime_{x}, \quad \forall u\in E^u(x),
   \end{equation}
   \begin{equation}\label{2.11}
   \|D_xf(v)\|^\prime_{fx}\leq e^{-\chi+\varepsilon}\|v\|^\prime_{x}, \quad \forall v\in E^s(x),
   \end{equation} 
    \begin{equation}\label{2.12}
   c_1\|u\|\leq\|u\|^\prime_{y}\leq l^\prime(x)\|u\|, \quad \forall y\in \mathcal{N}(x),u\in T_yM.
   \end{equation}


\subsection{$(\rho,\beta,\gamma)$-rectangles}
    Let $M,f,\mu$ be as above, let $d_u=\dim(E^u),d_s=\dim(E^s)$, then $d_u+d_s=d.$ Let $I=[-1,1]$,   we say $R(x)\subset M$ is a \emph{rectangle} in $M$ if there exists a $C^1$ embedding $\Phi_x:I^{d}\to M$ such that $\Phi_x(I^{d})=R(x)$ and  $\Phi_x(0)=x$. A set $\widetilde{H}$ is called an  \emph{admissible $u$-rectangle in $R(x)$}, if there exist $0<\lambda<1,$ $C^1$ maps $\phi_1,\phi_2:I^{d_
    u}\to I^{d_s}$ satisfying $\|\phi_1(u)\|\geq \|\phi_2(u)\|$ for $u\in I^{d_u}$ and $\|D\phi_i\|\leq \lambda$ for $i=1,2,$ such that  $\widetilde{H}=\Phi_x(H)$, where
      $$H=\{(u,v)\in I^{d_u}\times I^{d_s}: v=t\phi_1(u)+(1-t)\phi_2(u), 0\leq t\leq1 \}.$$
   Similarly define an \emph{admissible $s$-rectangle in $R(x)$}.
\begin{definition}\label{def 2.4}
    	Given  $f:M\to M$ and $\Lambda\subset M$ compact, we say that $R(x)$ is a \emph{$(\rho,\beta,\gamma)$-rectangle} of $\Lambda$ for $\rho>\beta>0,\gamma>0$, if there exists $\lambda=\lambda(\rho,\beta,\gamma)$ satisfying:
    	\begin{enumerate}
    		\item  $x\in \Lambda, B(x,\beta)\subset \text{\em int}~R(x)$  and $\text{\em diam}(R(x))\leq \rho/3$.
    		\item If $z,f^mz\in \Lambda\cap B(x,\beta) $ for some $m>0$, then the connected component $C\big(z,R(x)\cap f^{-m}R(x)\big)$ of $R(x)\cap f^{-m}R(x)$ containing $z$ is an admissible $s$-rectangle in $R(x)$, and $f^mC\big(z,R(x)\cap f^{-m}R(x)\big)$ is an admissible $u$-rectangle in $R(x)$.
    		\item $\text{\em diam}~f^kC\big(z,R(x)\cap f^{-m}R(x)\big)\leq \rho\cdot e^{-\gamma\min\{k,m-k\}}$, for $0\leq k\leq m$.
    	\end{enumerate}
\end{definition}
 The following lemma is a simplified statement of  \cite[Theorem S.4.16]{katok1995}. 
\begin{lemma}\label{thm 2.5}
	Let $f$ be a $C^r(r>1)$ diffeomorphism of a compact Riemannian manifold $M$, $\mu$ be an ergodic hyperbolic measure. Then for any  $\rho>0,$ $\delta>0,$ there exists a constant $\beta=\beta(\rho,\delta)>0$, such that  for any $x\in \mathcal{R}_\delta^{Df}$, there exists a  $(\rho,\beta,\frac{\chi}{2})$-rectangle $R(x)$.
\end{lemma}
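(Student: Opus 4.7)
The plan is to construct $R(x)$ directly from the Pesin charts $\Psi_x$ of Theorem~\ref{thm 2.3}, using that on the compact set $\mathcal R^{Df}_\delta$ the function $l'(x)$ is bounded by a constant $l'=l'_\delta$, so the Pesin charts have uniformly controlled size and distortion. First I would fix a small auxiliary constant $\varepsilon_0<\chi/4$ (to be used in Theorem~\ref{thm 2.3}) so that the hyperbolicity estimates \eqref{2.10}--\eqref{2.11} give contraction/expansion rates of at least $e^{\chi/2}$ in the Lyapunov metric $\|\cdot\|'$. Because $\widetilde E^u(x)\oplus\widetilde E^s(x)=\mathbb R^d$ orthogonally, I define
\[
R(x):=\Psi_x\bigl(Q(x)\bigr),\qquad Q(x):=\bigl\{(u,v)\in\widetilde E^u(x)\oplus\widetilde E^s(x):\max(\|u\|,\|v\|)\le r_0\bigr\},
\]
where $r_0=r_0(\rho,\delta)>0$ is chosen small enough that $r_0\le (l')^{-1}$ and $\mathrm{diam}\,R(x)\le\rho/3$ (possible by item (iv) of Theorem~\ref{thm 2.3}), and then set $\beta:=c_1 r_0/2$ so that $B(x,\beta)\subset \mathrm{int}\,R(x)$ by \eqref{2.12}. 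This handles item (i) of Definition~\ref{def 2.4}.

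The heart of the proof is item (ii): whenever $z,f^m z\in\Lambda\cap B(x,\beta)$, the component $C(z,R(x)\cap f^{-m}R(x))$ must be an admissible $s$-rectangle whose image under $f^m$ is an admissible $u$-rectangle. I would establish this by a standard graph-transform/Hadamard--Perron argument along the finite orbit $x,fx,\ldots,f^{m-1}x,f^m x$, conjugated through the charts $\Psi_{f^k x}$. Because $z$ and $f^m z$ both lie in $B(x,\beta)$ and $\beta$ is small compared with the chart size, the endpoints can be pulled back to small neighborhoods of $0$ in both $\mathbb R^d$; then the intermediate pieces are estimated by iterating the cone field preserved by $D\widetilde{f_{f^kx}}$, whose existence follows from \eqref{2.10}--\eqref{2.11} together with the Lipschitz estimate in item (iii) of Theorem~\ref{thm 2.3}. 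The outcome is that $C(z,R(x)\cap f^{-m}R(x))$, read in the chart, is the graph of a Lipschitz map from $\widetilde E^u(x)$ to $\widetilde E^s(x)$ with Lipschitz constant $\lambda<1$, and symmetrically for its image under $f^m$; transporting back via $\Phi_x:=\Psi_x|_{I^d}$ (up to rescaling coordinates so that $Q(x)$ becomes $I^d$) yields admissible rectangles in the sense of the definition.

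For item (iii), the exponential diameter bound, I would use the hyperbolic estimates along the orbit together with the fact that the two ends of the tube $C(z,R(x)\cap f^{-m}R(x))$ are confined to $R(x)$. An $s$-admissible rectangle's forward iterates contract in the $s$-direction at rate at most $e^{(-\chi+\varepsilon_0)k}$ in the Lyapunov metric, while its preimages under $f^{m-k}$ from the $u$-rectangle side contract in the $u$-direction at rate at most $e^{(-\chi+\varepsilon_0)(m-k)}$; taking the worse of the two and converting from $\|\cdot\|'$ to $\|\cdot\|$ through \eqref{2.12} absorbs the bounded factor $l'/c_1$ into a mild adjustment of $\rho$, giving the desired bound with exponent $\gamma=\chi/2$.

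The main obstacle I anticipate is keeping the geometry quantitative and uniform over $x\in\mathcal R^{Df}_\delta$. The orbit $x,\ldots,f^{m-1}x$ generally leaves $\mathcal R^{Df}_\delta$, so $l'(f^kx)$ need not be bounded by $l'$ and the charts $\Psi_{f^kx}$ can shrink exponentially; one must use the tempered bound $l'(f^kx)\le l'(x)e^{\varepsilon_0|k|}$ to show that the local graph-transform argument still closes up, and choose $\varepsilon_0$ small enough that the loss $e^{\varepsilon_0|k|}$ is dominated by the hyperbolic gain $e^{(\chi-\varepsilon_0)\min(k,m-k)}$. This is exactly where the proof of Theorem~S.4.16 in \cite{katok1995} is technical, and I would invoke the estimates there rather than redo them. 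Once this uniform control is secured, the choice of $\beta=\beta(\rho,\delta)$ is determined by requiring that every return $f^m z\in B(x,\beta)$ forces the orbit through small enough chart neighborhoods to trigger the graph transform, and the conclusion follows.
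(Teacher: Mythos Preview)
The paper does not actually prove this lemma: it is stated as ``a simplified statement of \cite[Theorem S.4.16]{katok1995}'' and no argument is given. Your proposal sketches precisely the standard Pesin-chart/graph-transform construction that underlies that theorem, and you yourself note that for the quantitative closing estimates you would ``invoke the estimates there rather than redo them.'' So in effect you and the paper agree: this is a citation, not a proof, and your outline is a reasonable summary of what the cited proof does.

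One small slip worth correcting if you expand the sketch: an admissible $s$-rectangle in $R(x)$ is, by Definition~\ref{def 2.4} (read symmetrically), a region bounded between two Lipschitz graphs over the \emph{stable} coordinates $I^{d_s}\to I^{d_u}$, not a single graph from $\widetilde E^u(x)$ to $\widetilde E^s(x)$ as you wrote. The connected component $C(z,R(x)\cap f^{-m}R(x))$ is full-dimensional, thin in the unstable direction and full in the stable one; dually for its $f^m$-image. This does not affect the strategy, only the bookkeeping.
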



\section{Proof of Theorem \ref{thm A}}
  In this section, we will give the proof of Theorem \ref{thm A}. We begin with estimating the growth of vectors in certain invariant cones. 
  \subsection{Invariant cones}
  	Let $f$ be a $C^r$ diffeomorphism of a compact Riemannian manifold $M$ with $r>1$,  and $\mu$ be an ergodic hyperbolic measure for $f$ with $h_{\mu}(f)>0$. Assume  $A:M\to L(X)$  be a H\"older continuous map such that $A(x)$ is injective for  every $x\in M$  and $\lambda(\mathcal{A},\mu)>\kappa(\mathcal{A},\mu)$.  Then by the multiplicative ergodic theorem stated in subsection 2.1, there are Lyapunov exponents $\lambda_1>\lambda_2>\cdots$  indexed in $\mathbb{N}_{k_0}$ for some $k_0\leq +\infty.$ 
  	
  	Fix  any $i\in \mathbb{N}_{k_0}$, and let $\varepsilon_i:=\min\{\frac{1}{2}\chi\alpha,\frac{\chi}{4},\frac{\chi(r-1)}{2r},\frac{\lambda_1-\lambda_{2}}{8},\cdots,\frac{\lambda_{i}-\lambda_{i+1}}{8} \}$.
  	For any $0<\varepsilon< \varepsilon_i, x\in\mathcal{R}^\mathcal{A},$ let $\|\cdot\|_x=\|\cdot\|_{x,i,\varepsilon}$ be the Lyapunov norm on $X$ defined in \eqref{2.2}.  For any $1\leq j\leq i$, we have the Oseledets decomposition $X=H_j(x)\oplus F_j(x)$, where $H_j(x)=E_1(x)\oplus\cdots\oplus E_j(x)$. For any $u\in X,$ let $u=u_H+u_F$, where $u_H\in H_j(x),$ $u_F\in F_j(x)$, we  consider two cones:
       \[U_j(x,\theta):=\{u\in X:\|u_F\|_x\leq \theta \|u_H\|_x  \},  \]
       \[V_j(x,\theta):=\{u\in X:\|u_H\|_x\leq \theta \|u_F\|_x  \}.  \]	
	 For any $\delta>0,$ a sequence $(x_n)_{n\in\mathbb{Z}}$ is called  a {\em $\rho$-pseudo-orbit of $f^m$ in $\mathcal{R}^\mathcal{A}_\delta$} for some $ m \in \mathbb{N}$, if 
    $x_n,f^m(x_n)\in \mathcal{R}^\mathcal{A}_\delta$ and $d(f^mx_n,x_{n+1})\leq \rho$ for any $n\in\mathbb{Z}$. Let $l=l_\delta$ be as in \eqref{2.9}. We have the following lemma.

\begin{lemma}\label{lemma 3.1}
    For any $0<\varepsilon<\varepsilon_i,$ $\delta>0$,  there exist $\rho_0>0,  ~\theta_0>0$  such that for any $0<\rho<\rho_0 $, $1\leq j \leq i$, any $\rho$-pseudo-orbit $(x_n)_{n\in\mathbb{Z}}$ of $f^m$ in $\mathcal{R}^\mathcal{A}_\delta$ with $m\geq \frac{2\log l}{\varepsilon}$,  and for any $y\in M$ with $d(f^kx_{n},f^{k}(f^{nm}y))\leq \rho\cdot e^{-\frac{\chi}{2}\min\{k,m-k\}}$,~$k=0,\cdots,m$, we can find $\eta=\eta(\varepsilon,\delta,m)\in(0,1)$ such that
        \begin{enumerate}
        	\item $ \mathcal{A}_{f^{nm+k}y}^mU_j(f^kx_{n},\theta_0)\subset U_j(f^kx_{n+1},\eta\theta_0),$   and 
        	$$e^{(\lambda_j-4\varepsilon)m}\|u\|\leq\|\mathcal{A}_{f^{nm+k}y}^m(u)\|\leq e^{(\lambda_1+4\varepsilon)m}\|u\|, \forall u\in U_j(f^kx_n,\theta_0).$$
        	\item  $\mathcal{A}_{f^{nm+k}y}^{-m}V_j(f^kx_{n},\theta_0)\subset V_j(f^kx_{n-1},\eta\theta_0),$ and 
        	$$\|\mathcal{A}_{f^{(n-1)m+k}y}^m(v)\|\leq e^{(\lambda_{j+1}+4\varepsilon)m}\|v\|, \forall v\in \mathcal{A}_{f^{nm+k}y}^{-m}V_j(f^kx_{n},\theta_0).$$ 
        \end{enumerate}	
\end{lemma}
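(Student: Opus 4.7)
The strategy is to compare $\mathcal{A}^m_{f^{nm+k}y}$ with the reference cocycle $\mathcal{A}^m_{f^k x_n}$ along the segment $z_{k'} := f^{k'}x_n$, control their difference through H\"older continuity plus the exponential shadowing, and then run a cone argument in the Lyapunov norm. I describe the case $k=0$; the general case is analogous, splitting the segment at the jump from $f^m x_n$ to $x_{n+1}$.

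Iterating Lemma \ref{lemma 2.2} along $z_0,\ldots,z_m$ and splitting $u = u_H + u_F$ under $X = H_j(z_{k'})\oplus F_j(z_{k'})$, the additivity $\|u\|_{z_{k'}} = \sum_\ell \|u_\ell\|_{z_{k'}}$ of the Lyapunov norm over the Oseledets components combined with $\lambda_\ell \ge \lambda_j$ (for $\ell\le j$) and $\lambda_\ell \le \lambda_{j+1}$ (for $\ell \ge j+1$) yields
\[
e^{(\lambda_j-\varepsilon)m}\|u_H\|_{z_0} \le \|\mathcal{A}^m_{z_0}u_H\|_{z_m} \le e^{(\lambda_1+\varepsilon)m}\|u_H\|_{z_0},\quad \|\mathcal{A}^m_{z_0}u_F\|_{z_m}\le e^{(\lambda_{j+1}+\varepsilon)m}\|u_F\|_{z_0}.
\]
Setting $B_{k'} := A(f^{nm+k'}y) - A(z_{k'})$, H\"older continuity of $A$ with exponent $\alpha$ and the shadowing $d(z_{k'}, f^{nm+k'}y) \le \rho\, e^{-(\chi/2)\min(k', m-k')}$ give $\|B_{k'}\| \le C\rho^\alpha e^{-(\chi\alpha/2)\min(k', m-k')}$. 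Since $z_0, z_m\in\mathcal{R}^\mathcal{A}_\delta$, the temperedness \eqref{2.7} yields $K(z_{k'+1}) \le l\,e^{\min(k'+1,\, m-k'-1)\varepsilon}$, which combined with $\|\cdot\|\le \|\cdot\|_{z_{k'}}$ from \eqref{2.6} produces
\[
\|B_{k'}w\|_{z_{k'+1}} \le \epsilon_{k'}\|w\|_{z_{k'}}, \qquad \epsilon_{k'} \le l\,C_1\rho^\alpha e^{-c\min(k',\, m-k')},
\]
where $c := \chi\alpha/2 - \varepsilon > 0$ by $\varepsilon < \varepsilon_i \le \chi\alpha/2$. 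Hence $\sum_{k'=0}^{m-1}\epsilon_{k'} \le l\,C_2\rho^\alpha$, uniformly in $m$.

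Now tracking $w_{k'} := \mathcal{A}^{k'}_{f^{nm}y}u$, decomposed at $z_{k'}$ as $w_{k'}^H + w_{k'}^F$ with $a_{k'} := \|w_{k'}^H\|_{z_{k'}}$, $b_{k'} := \|w_{k'}^F\|_{z_{k'}}$, and using that $A(z_{k'})$ preserves the Oseledets splitting while the Lyapunov-norm projections have operator norm $1$, I obtain the recursion
\[
a_{k'+1} \ge e^{\lambda_j-\varepsilon}a_{k'} - \epsilon_{k'}(a_{k'}+b_{k'}),\quad a_{k'+1} \le e^{\lambda_1+\varepsilon}a_{k'} + \epsilon_{k'}(a_{k'}+b_{k'}),\quad b_{k'+1} \le e^{\lambda_{j+1}+\varepsilon}b_{k'} + \epsilon_{k'}(a_{k'}+b_{k'}).
\]
For $u \in U_j(z_0,\theta_0)$ we have $b_0 \le \theta_0 a_0$; choosing $\rho_0$ sufficiently small so that $\sum\epsilon_{k'}$ is arbitrarily small compared to $\theta_0 e^{\lambda_j-\varepsilon}$, an induction on $\tau_{k'}:=b_{k'}/a_{k'}$ exploits the spectral gap $\lambda_j - \lambda_{j+1} \ge 8\varepsilon$ to give $\tau_{k'}\le \theta_0$ throughout and $\tau_m \le \eta\theta_0$ for some $\eta=\eta(\varepsilon,\delta,m)\in(0,1)$. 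The same recursion produces $e^{(\lambda_j-2\varepsilon)m}\|u\|_{z_0}\le \|\mathcal{A}^m_{f^{nm}y}u\|_{z_m} \le e^{(\lambda_1+2\varepsilon)m}\|u\|_{z_0}$.

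Finally, since $f^m x_n, x_{n+1} \in \mathcal{R}^\mathcal{A}_\delta$ are $\rho$-close, the continuity of the Oseledets decomposition and of the map $x\mapsto \|\cdot\|_x$ on $\mathcal{R}^\mathcal{A}_\delta$ (Section 2.3) gives $U_j(f^m x_n,\eta\theta_0) \subset U_j(x_{n+1},\eta'\theta_0)$ for a slightly larger $\eta' < 1$, upon further shrinking $\rho_0$. The ambient-norm bounds in the statement then follow from $\|\cdot\|\le \|\cdot\|_z\le l\|\cdot\|$ at $z\in\{z_0,z_m\}$ combined with $l\le e^{\varepsilon m/2}$ (from $m\ge 2\log l/\varepsilon$), which absorbs the $l$-factors and enlarges the $2\varepsilon$ bounds to $4\varepsilon$. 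Part (ii) is handled symmetrically by applying the analogous argument to the inverse cocycle on the cone $V_j$: backward iteration makes the $F$-component dominate more strongly by the same spectral gap, so $V_j$ is invariant under $\mathcal{A}^{-m}$, and the forward-iteration bound $\|\mathcal{A}^m v\|\le e^{(\lambda_{j+1}+4\varepsilon)m}\|v\|$ for $v \in \mathcal{A}^{-m}V_j$ follows from the Lyapunov-norm estimate on $F_j$ in exactly the same way. The main obstacle is the additive perturbation analysis: in a Banach space $A(z_{k'})$ need not be invertible, so no multiplicative perturbation identity is available and the H\"older error must be tracked additively through $m$ compositions. It is precisely the combination of the exponential shadowing rate $\chi/2$ with the subcritical choice $\varepsilon < \chi\alpha/2$ that keeps $\sum \epsilon_{k'}$ bounded uniformly in $m$, enabling $\rho_0$ to depend only on $\varepsilon$ and $\delta$.
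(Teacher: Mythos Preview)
Your treatment of part (i) is correct and essentially the same as the paper's: both compare $A(f^{k'}y)$ with $A(f^{k'}x_0)$ step by step, bound the difference in Lyapunov norm using H\"older continuity together with $\varepsilon<\chi\alpha/2$, and then propagate through the cone. The paper packages this as a one-step cone-contraction claim while you write it as a recursion on $(a_{k'},b_{k'})$; these are equivalent.

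The genuine gap is in part (ii). The assertion that it is ``handled symmetrically by applying the analogous argument to the inverse cocycle'' fails in this Banach setting for two linked reasons. First, $A(x)$ is only assumed injective, so $\mathcal{A}^{-m}$ is not a cocycle of bounded operators and there is no inverse dynamics on which to rerun the argument. Second, and decisively, the Lyapunov norm on $F_j$ is the one-sided sum \eqref{2.3}, which yields only the upper bound \eqref{2.5}; there is \emph{no} companion lower bound $\|A(x)u_F\|_{fx}\ge e^{\lambda-\varepsilon}\|u_F\|_x$, because $A(x)F_j(x)\subset F_j(fx)$ is merely an inclusion. Thus the claim that ``backward iteration makes the $F$-component dominate more strongly by the same spectral gap'' cannot be justified from the available estimates.

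The paper avoids backward iteration entirely. It proves the one-step $U_j$-contraction not just at $\theta_0$ but for the whole range $\theta_0\le\theta\le\eta_0^{-1}\theta_0^{-1}$ (this is the point of that range in the Claim), and then uses the complement identity $V_j(x,\theta_0)=X\setminus U_j^\circ(x,\theta_0^{-1})$ together with injectivity of $A(x)$ to convert forward $U_j$-invariance at the large angle $\eta^{-1}\theta_0^{-1}$ into the backward inclusion $\mathcal{A}_{f^my}^{-m+k}V_j(x_1,\theta_0)\subset V_j(f^kx_0,\eta\theta_0)$ for every $0\le k\le m$. Only after this is established does the forward upper bound in (ii) follow, step by step, using that each intermediate vector lies in $V_j(f^kx_0,\theta_0)$ (so its $H$-part is dominated by $\theta_0$ times its $F$-part) and using only the available \emph{upper} bounds on both $H_j$ and $F_j$. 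Your argument needs exactly this ingredient---the $U_j$-invariance for the wider cone plus the complement/injectivity trick---to close part (ii).
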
	

\begin{proof}
	{\bf(\romannumeral1)} For simplicity of notations, we prove $\mathcal{A}_y^mU_j(x_{0},\theta_0)\subset U_j(x_{1},\eta\theta_0)$ and $ e^{(\lambda_{j}-4\varepsilon)m}\|u\|\leq\|\mathcal{A}_y^m(u)\|\leq e^{(\lambda_1+4\varepsilon)m}\|u\|, \forall u\in U_j(x_{0},\theta_0), 1\leq j\leq i.$ 
	
	For any fixed $0<\varepsilon<\varepsilon_i,$ $\delta>0$, denote $\theta_0:=e^{\lambda_{i+1}-\lambda_1}(e^\varepsilon-1)<e^\varepsilon-1$, and \\${\eta}_0:=\max \{e^{\lambda_{j+1}-\lambda_j+4\varepsilon}: 1\leq j\leq i\}$.   We have the following claim.
	\begin{claim}
		There exist  $\widetilde{\rho}_0>0,$ such that for any $\theta_0\leq \theta\leq \eta_0^{-1}\theta_0^{-1}, $ $1\leq j \leq i,$ and for any $x_{0},f^mx_{0}\in \mathcal{R}^\mathcal{A}_\delta$, $y\in M$ with  $d(f^kx_{0},f^{k}y)\leq \rho\cdot e^{-\frac{\chi}{2}\min\{k,m-k\}}$~for~$k=0,\cdots,m$ and $0<\rho<\widetilde{\rho}_0$, 
		we have  
	\begin{equation}\label{3.0.1}
		A(f^{k}y)U_j(f^{k}x_{0},\theta)\subset U_j(f^{k+1}x_{0},{\eta}_0\theta), \quad \forall ~0\leq k\leq m-1.
	\end{equation}
	\end{claim}
	\begin{proof}[Proof of the Claim]
		For any $0\leq k\leq m-1, ~u=u_H+u_F\in U_j(f^{k}x_{0},\theta)$, by \eqref{2.4} and \eqref{2.5}, we have
		\begin{equation}\label{3.1}
		\|A(f^kx_{0})u_H\|_{f^{k+1}x_{0}}\geq e^{\lambda_j-\varepsilon}\|u_H\|_{f^{k}x_{0}},
		\end{equation}
		\begin{equation}\label{3.2}
		\|A(f^kx_{0})u_F\|_{f^{k+1}x_{0}}\leq e^{\lambda_{j+1}+\varepsilon}\|u_F\|_{f^{k}x_{0}}.
		\end{equation}
		Let $w=(A(f^{k}y)-A(f^kx_{0}))u=w_H+w_F, $ where $w_H\in H_j(f^{k+1}x_{0}),$ $w_F\in F_j(f^{k+1}x_{0})$,  then 
		\begin{equation}\label{3.2.0}
		A(f^{k}y)u=w+A(f^kx_{0})u. 
		\end{equation} 
		By \eqref{2.6}, \eqref{2.7} and \eqref{2.9},
		\begin{equation}\label{3.3}
		\begin{split}
		\|w_H\|_{f^{k+1}x_{0}}\leq \|w\|_{f^{k+1}x_{0}} & \leq K(f^{k+1}x_{0})\|A(f^{k}y)-A(f^kx_{0})\|\cdot\|u\| \\
		& \leq le^{\varepsilon\min\{k+1,m-k-1 \}}\cdot c_0\rho^\alpha e^{-\frac{\chi}{2}\alpha\min\{k,m-k \}}\|u\|\\
		& \leq  c_0le^\varepsilon\rho^\alpha  e^{(\varepsilon-\frac{\chi}{2}\alpha)\min\{k,m-k \}}\|u\|_{f^{k}x_{0}}\\
		& \leq (1+\theta)c_0le^\varepsilon\rho^\alpha \|u_H\|_{f^{k}x_{0}},
		\end{split}
		\end{equation}
		since $\varepsilon-\frac{\chi}{2}\alpha<0.$ Similarly, 
		\begin{equation}\label{3.4}
		\|w_F\|_{f^{k+1}x_{0}}\leq (1+\theta)c_0le^\varepsilon\rho^\alpha\|u_H\|_{f^{k}x_{0}}.
		\end{equation}
		Let $$A(f^{k}y)u=(A(f^{k}y)u)_H+(A(f^{k}y)u)_F,$$ where $(A(f^{k}y)u)_H\in H_j(f^{k+1}x_{0}),$ $(A(f^{k}y)u)_F\in F_j(f^{k+1}x_{0})$. Then by \eqref{3.1}, \eqref{3.2.0} and \eqref{3.3}, for any $\theta_0\leq \theta\leq \eta_0^{-1}\theta_0^{-1},$
		\begin{equation}\label{3.5}
		\begin{split}
		\|(A(f^{k}y)u)_H\|_{f^{k+1}x_{0}} & \geq \|A(f^kx_{0})u_H\|_{f^{k+1}x_{0}}-\|w_H\|_{f^{k+1}x_{0}}\\
		& \geq e^{\lambda_j-\varepsilon}\|u_H\|_{f^{k}x_{0}}-(1+\theta)c_0le^\varepsilon\rho^\alpha \|u_H\|_{f^{k}x_{0}}\\
		& \geq e^{\lambda_j-2\varepsilon}\|u_H\|_{f^{k}x_{0}}
		\end{split}
		\end{equation}
		if $\rho$ is small enough. Similarly, by \eqref{3.2}, \eqref{3.2.0}  and \eqref{3.4},       
		\begin{equation}\label{3.5.1}
		\begin{split}
	  \|(A(f^{k}y)u)_F\|_{f^{k+1}x_{0}}
		\leq &  ~ \|A(f^kx_{0})u_F\|_{f^{k+1}x_{0}}+\|w_F\|_{f^{k+1}x_{0}}\\
		\leq &  ~ e^{\lambda_{j+1}+\varepsilon}\|u_F\|_{f^{k}x_{0}}+(1+\theta)c_0le^\varepsilon\rho^\alpha\|u_H\|_{f^{k}x_{0}}\\
		\leq &  ~ \theta e^{\lambda_{j+1}+2\varepsilon}\|u_H\|_{f^{k}x_{0}},
		\end{split}
		\end{equation} 
		if $\rho$ is small enough. Thus 
		$$\|(A(f^{k}y)u)_F\|_{f^{k+1}x_{0}}\leq {\eta}_0\theta \|(A(f^{k}y)u)_H\|_{f^{k+1}x_{0}},~ \forall \theta_0\leq \theta\leq \eta_0^{-1}\theta_0^{-1},$$
		that is, $A(f^{k}y)U_j(f^{k}x_{0},\theta)\subset U_j(f^{k+1}x_{0},{\eta}_0\theta)$.   
	\end{proof}
	The Claim implies 
	\begin{equation*}
	\mathcal{A}_y^mU_j(x_{0},\theta)\subset U_j(f^mx_{0},{\eta}_0\theta),~\forall \theta_0\leq\theta\leq \eta_0^{-1}\theta_0^{-1},1\leq j \leq i.
	\end{equation*}
	Since, by subsection 2.3, the Lyapunov norm and the Oseledets decomposition are uniformly continuous on the compact set $\cup_{k=1}^mf^k\mathcal{R}^\mathcal{A}_\delta$ , 
	there exists ${\eta}_0<\eta<1$ such that
	\begin{equation}\label{3.5.7}
	U_j(f^mx_{0},\eta_0\theta)\subset U_j(x_{1},\eta\theta), \quad \forall d(f^mx_{0},x_{1})\leq \rho, ~\theta_0\leq\theta\leq \eta_0^{-1}\theta_0^{-1},
	\end{equation}
	if $\rho $ is small enough. Hence for any $1\leq j \leq i,$ we have
	 \begin{equation}\label{3.5.8}
	  \mathcal{A}_y^mU_j(x_{0},\theta)\subset U_j(x_{1},\eta\theta),~\forall \theta_0\leq\theta\leq \eta_0^{-1}\theta_0^{-1}.
	 \end{equation}
	   Moreover,  for any $0\leq k\leq m-1, ~u\in U_j(f^{k}x_{0},\theta_0)$, it follows from \eqref{3.5} that  
	\begin{align*}
	 \|A(f^{k}y)u\|_{f^{k+1}x_{0}} & \geq \|(A(f^{k}y)u)_H\|_{f^{k+1}x_{0}} \\
	                               & \geq e^{\lambda_{j}-2\varepsilon}\|u_H\|_{f^{k}x_{0}} \\
                               	   & \geq (1+\theta_0)^{-1}e^{\lambda_{j}-2\varepsilon}\|u\|_{f^{k}x_{0}}\\
	                               & \geq e^{\lambda_{j}-3\varepsilon}\|u\|_{f^{k}x_{0}}.
	\end{align*}	
	Therefore, for any $1\leq j\leq i, ~u\in U_j(x_0,\theta_0)$,  by \eqref{3.0.1} and \eqref{2.6}, we conclude
	$$\|\mathcal{A}_y^m(u)\|\geq\frac{1}{l}\|\mathcal{A}_y^m(u)\|_{f^mx_{0}}\geq \frac{1}{l}e^{(\lambda_{j}-3\varepsilon)m}\|u\|_{x_{0}}\geq e^{(\lambda_{j}-4\varepsilon)m}\|u\|.$$
	Similar to \eqref{3.5}, we can also get
	\begin{equation*}
	\|(A(f^{k}y)u)_H\|_{f^{k+1}x_{0}}  \leq \|A(f^kx_{0})u_H\|_{f^{k+1}x_{0}}+\|w_H\|_{f^{k+1}x_{0}}\leq e^{\lambda_1+2\varepsilon}\|u_H\|_{f^{k}x_{0}}.
	\end{equation*}
	Thus we obtain by using \eqref{3.5.1} that 
	\begin{align*}
	\|A(f^{k}y)u\|_{f^{k+1}x_{0}} & \leq \|(A(f^{k}y)u)_H\|_{f^{k+1}x_{0}}+ \|(A(f^{k}y)u)_F\|_{f^{k+1}x_{0}} \\
	                              & \leq e^{\lambda_1+2\varepsilon}\|u_H\|_{f^{k}x_{0}}+ \theta_0 e^{\lambda_{j+1}+2\varepsilon}\|u_H\|_{f^{k}x_{0}}\\
	                              & \leq e^{\lambda_1+3\varepsilon}\|u\|_{f^{k}x_{0}}.
	\end{align*}
	Hence,  for any $u\in U_j(x_0,\theta_0)$,  by \eqref{3.0.1} and \eqref{2.6}, we conclude
	\begin{align*}
	\|\mathcal{A}_y^m(u)\|\leq \|\mathcal{A}_y^m(u)\|_{f^{m}x_{0}} \leq e^{(\lambda_1+3\varepsilon)m}\|u\|_{x_{0}}
	                                                             \leq le^{(\lambda_1+3\varepsilon)m}\|u\|
	                                                             \leq e^{(\lambda_1+4\varepsilon)m}\|u\|.
	\end{align*}
	This proves the conclusion	{\bf(\romannumeral1)}.
	
	{\bf(\romannumeral2)} For simplicity, we only prove $\mathcal{A}_{f^my}^{-m}V_j(x_{1},\theta_0)\subset V_j(x_{0},\eta\theta_0)$ and\\ $\|\mathcal{A}_y^m(v)\|\leq e^{(\lambda_{j+1}+4\varepsilon)m}\|v\|, \forall v\in \mathcal{A}_{f^my}^{-m}V_j(x_{1},\theta_0), 1\leq j\leq i.$ 
	
	Similar to the proof of \eqref{3.5.8}, one has 
	\begin{align*}
	\mathcal{A}_{f^ky}^{m-k}U_j(f^kx_{0},\theta) \subset U_j(x_{1},\eta\theta), ~\forall \theta_0\leq\theta\leq \eta_0^{-1}\theta_0^{-1},0\leq k\leq m-1.
	\end{align*}
	Let  $U_j^\circ(x,\theta):=\{u\in X:\|u_F\|_{x}< \theta \|u_H\|_{x}  \}$. Then
	\[U_j^\circ(x,\theta)=\bigcup_{n=1}^{\infty}U_j(x,\frac{n-1}{n}\theta).\]
	Therefore, for any $0\leq k\leq m-1,$
	\begin{align*}
	\mathcal{A}_{f^ky}^{m-k}U_j(f^kx_{0},\frac{n-1}{n}\eta^{-1}\theta_0^{-1}) \subset U_j(x_{1},\frac{n-1}{n}\theta_0^{-1}), ~\forall n>1,
	\end{align*}
	which implies $$ \mathcal{A}_{f^ky}^{m-k}U_j^\circ(f^kx_{0},\eta^{-1}\theta_0^{-1})\subset U_j^\circ(x_{1},\theta_0^{-1}).$$
	Since $A(x)$ is injective for any $x\in M$, we have
	$$U_j^\circ(f^kx_{0},\eta^{-1}\theta_0^{-1})\subset(\mathcal{A}_{f^ky}^{m-k})^{-1} U_j^\circ(x_{1},\theta_0^{-1}).$$
	Thus $$\mathcal{A}_{f^my}^{-m+k}\big(X\setminus U_j^\circ(x_{1},\theta_0^{-1})\big)\subset X\setminus U_j^\circ(f^kx_{0},\eta^{-1}\theta_0^{-1}),$$
	that is,
	$$\mathcal{A}_{f^my}^{-m+k}  V_j(x_{1},\theta_0)\subset V_j(f^kx_{0},\eta\theta_0), \forall 0\leq k\leq m-1.$$ 
	In particular, 
	\[\mathcal{A}_{f^my}^{-m}  V_j(x_{1},\theta_0)\subset V_j(x_{0},\eta\theta_0). \]
	Now, for any $w\in \mathcal{A}_{f^my}^{-m+k}  V_j(x_{1},\theta_0)\subset V_j(f^kx_{0},\eta\theta_0)\subset V_j(f^kx_{0},\theta_0),$ Let $w=w_H+w_F,$ where $w_H\in H_j(f^kx_{0}),w_F\in F_j(f^kx_{0})$. Then 
	\[\|A(f^kx_0)w_H\|_{f^{k+1}x_0}\leq e^{\lambda_1+\varepsilon}\|w_H\|_{f^{k}x_0},~\|A(f^kx_0)w_F\|_{f^{k+1}x_0}\leq e^{\lambda_{j+1}+\varepsilon}\|w_F\|_{f^{k}x_0}. \]	
	It follows that 	
	\begin{align*}
	\|A(f^kx_0)w\|_{f^{k+1}x_0} &\leq \|A(f^kx_0)w_H\|_{f^{k+1}x_0}+\|A(f^kx_0)w_F\|_{f^{k+1}x_0}\\
	                            &\leq e^{\lambda_1+\varepsilon}\theta_0\|w_F\|_{f^{k}x_0}+e^{\lambda_{j+1}+\varepsilon}\|w_F\|_{f^{k}x_0}\\
	                            &\leq e^{\lambda_{j+1}+2\varepsilon}\|w\|_{f^{k}x_0}.
	\end{align*}
	 Similar to \eqref{3.3}, we can obtain $\|A(f^ky)w-A(f^kx_0)w\|_{f^{k+1}x_0}\leq  c_0le^{\varepsilon}\rho^\alpha \|w\|_{f^{k}x_0}$. 
	 Therefore,	 for any $w\in \mathcal{A}_{f^my}^{-m+k}  V_j(x_{1},\theta_0),$ where $0\leq k\leq m-1,$ one has $A(f^ky)w\in \mathcal{A}_{f^my}^{-m+k+1}  V_j(x_{1},\theta_0)$ and
	\begin{align*}
	\|A(f^ky)w\|_{f^{k+1}x_0} &\leq \|A(f^ky)w-A(f^kx_0)w\|_{f^{k+1}x_0}+\|A(f^kx_0)w\|_{f^{k+1}x_0}\\
	                          &\leq c_0le^{\varepsilon}\rho^\alpha \|w\|_{f^{k}x_0}+e^{\lambda_{j+1}+2\varepsilon}\|w\|_{f^{k}x_0}\\
	                          &\leq e^{\lambda_{j+1}+3\varepsilon}\|w\|_{f^{k}x_0},
	\end{align*}
	if $\rho$ is small enough. It implies that for any $v\in \mathcal{A}_{f^my}^{-m}  V_j(x_{1},\theta_0)$,
	\[	\|A_{y}^{m}(v)\|\leq \|A_{y}^{m}(v)\|_{f^{m}x_0} \leq  e^{(\lambda_{j+1}+3\varepsilon)m}\|v\|_{x_0}\leq e^{(\lambda_{j+1}+4\varepsilon)m}\|v\|.  \]
	This completes the proof.
\end{proof}
	    

   We now consider the cones for diffeomorphisms. For any $x\in \mathcal{R}^{Df}$, by subsection 2.4, we consider the trivialization $T_{\mathcal{N}(x)}M\equiv\mathcal{N}(x)\times\mathbb{R}^d.$ For any $y\in \mathcal{N}(x)$, we have the splitting $T_yM=E^u(y)\oplus E^s(y)$ which is  translated from $T_xM=E^u(x)\oplus E^s(x).$ For any ${u}\in T_yM$, let $u=u_u+u_s,$  where $u_u\in E^u(y),u_s\in E^s(y)$. We consider  the cones 
   \[U(y,\theta):=\{u\in T_yM:\|{u}_s\|^\prime_y\leq\theta\|{u}_u\|^\prime_y \},\] 
    \[V(y,\theta):=\{u\in T_yM:\|{u}_u\|^\prime_y\leq\theta\|{u}_s\|^\prime_y\}.\]
    Now for any $\rho$-pseudo-orbit $(x_n)_{n\in\mathbb{Z}}$ of $f^m$ in $\mathcal{R}^{Df}_\delta$  and for any $y\in M$ with $d(f^kx_{n},f^{k}(f^{nm}y))\leq \rho\cdot e^{-\frac{\chi}{2}\min\{k,m-k\}}$,~$k=0,\cdots,m$, we consider  the splitting $T_{f^{nm}y}M=E^u({f^{nm}y})\oplus E^s({f^{nm}y})$  translated from $T_{x_n}M=E^u({x_n})\oplus E^s({x_n}).$ Then we have the following Lemma, which  comes from Katok \cite{Katok}. We also give a proof here for the completeness.
\begin{lemma}\label{lemma 3.2}
	For any $0<\varepsilon<\varepsilon_i,$ $\delta>0,$ let $\theta=e^\varepsilon-1$, then there exist $\rho_1>0,$ $0<\eta^\prime<1$, such that for any $0<\rho<\rho_1$, any $\rho$-pseudo-orbit $(x_n)_{n\in\mathbb{Z}}$ of $f^m$ in $\mathcal{R}^{Df}_\delta$ with $m\geq\frac{\log l^\prime-\log c_1}{\varepsilon}, $  and for any $y\in M$ with $d(f^kx_{n},f^{k}(f^{nm}y))\leq \rho\cdot e^{-\frac{\chi}{2}\min\{k,m-k\}}$,~$k=0,\cdots,m$,  we have
	\[(D_{f^{nm}y}f^m)U(f^{nm}y,\theta)\subset U({f^{(n+1)m}y},\eta^\prime\theta),\] 
	\[(D_{f^{nm}y}f^{-m})V({f^{nm}y},\theta)\subset V({f^{(n-1)m}y},\eta^\prime\theta),\]
	for any $n\in \mathbb{Z}.$ Moreover, for any $u\in U({f^{nm}y},\theta), v\in V({f^{nm}y},\theta),$ we have
	\[\|D_{f^{nm}y}f^m(u)\|\geq e^{(\chi-4\varepsilon)m}\|u\|,\quad\|D_{f^{nm}y}f^{-m}(v)\|\geq e^{-(-\chi+4\varepsilon)m}\|v\|.\]
\end{lemma}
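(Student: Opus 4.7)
The plan is to follow the same strategy as in Lemma \ref{lemma 3.1}, with the derivative cocycle $Df$ playing the role of $\mathcal{A}$ and with the chart-norms $\|\cdot\|^\prime_y$ coming from the regular neighborhoods replacing the abstract Lyapunov norms on $X$. The three key inputs are the hyperbolicity estimates \eqref{2.10}--\eqref{2.11} at the pseudo-orbit points $x_n$, the Hölder control on $Df$ in charts provided by Theorem \ref{thm 2.3}(iii), and the comparison \eqref{2.12} between $\|\cdot\|$ and $\|\cdot\|^\prime$.

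First I would work one iterate at a time inside a single regular neighborhood $\mathcal{N}(x_n)$. Fix $n\in\mathbb{Z}$, set $y'=f^{nm}y$, and trivialize over $\mathcal{N}(x_n)$ via $\Psi_{x_n}$ so that $D_{x_n}f=\text{diag}(A_u,A_s)$ on $E^u(x_n)\oplus E^s(x_n)$ with $\|A_u^{-1}\|^{-1}\ge e^{\chi-\varepsilon}$ and $\|A_s\|\le e^{-\chi+\varepsilon}$. By Theorem \ref{thm 2.3}(iii), for every $0\le k\le m-1$ the perturbation $D_{f^ky'}f-D_{f^kx_n}f$, read through the chart, is bounded in operator norm by $l^\prime(x_n)\,d(f^ky',f^kx_n)^{r-1}\le l^\prime_\delta\,\rho^{r-1}e^{-\frac{\chi(r-1)}{2}\min\{k,m-k\}}$. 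Splitting $u=u_u+u_s\in U(f^ky',\theta)$ and writing $D_{f^ky'}f(u)=D_{f^kx_n}f(u)+w$ as in \eqref{3.2.0}, I would bound the $E^u$- and $E^s$-components of the image exactly as in \eqref{3.5} and \eqref{3.5.1}; choosing $\rho<\rho_1$ small enough in terms of $l^\prime_\delta,\chi,\varepsilon$ yields the one-step cone invariance $D_{f^ky'}f\,U(f^ky',\theta_k)\subset U(f^{k+1}y',\eta_0\theta_k)$ for some $\eta_0<1$ (of order $e^{-2\chi+O(\varepsilon)}$) together with the one-step expansion $\|D_{f^ky'}f(u)\|^\prime_{f^{k+1}y'}\ge e^{\chi-3\varepsilon}\|u\|^\prime_{f^ky'}$.

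Iterating $m$ times gives $D_{y'}f^m\,U(y',\theta)\subset U(f^my',\eta_0^m\theta)$ and $\|D_{y'}f^m(u)\|^\prime_{f^my'}\ge e^{(\chi-3\varepsilon)m}\|u\|^\prime_{y'}$. Converting from $\|\cdot\|^\prime$ back to $\|\cdot\|$ via \eqref{2.12} costs a factor $c_1/l^\prime_\delta$, which is absorbed into $e^{-\varepsilon m}$ precisely because $m\ge(\log l^\prime-\log c_1)/\varepsilon$, yielding the stated $e^{(\chi-4\varepsilon)m}$ expansion. To pass from cones at $f^my'$ expressed in the chart $\Psi_{x_n}$ to cones expressed in $\Psi_{x_{n+1}}$ I would invoke the uniform continuity of $\Psi_x$ and of the splitting $E^u\oplus E^s$ on $\mathcal{R}^{Df}_\delta$ combined with $d(f^mx_n,x_{n+1})\le\rho$; this is the analogue of \eqref{3.5.7} and produces a final $\eta^\prime\in(\eta_0^{m_0},1)$, uniform in $m\ge m_0:=(\log l^\prime-\log c_1)/\varepsilon$ because the intrinsic cone ratio $\eta_0^m\theta$ only shrinks as $m$ grows.

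The stable-cone statement under $Df^{-m}$ is obtained by the same duality trick used at the end of Lemma \ref{lemma 3.1}: first upgrade the forward invariance of $U(\cdot,\theta)$ to forward invariance of the open wider cone $U^\circ(\cdot,\eta^{\prime-1}\theta_0^{-1})$, then invert using that $D_yf$ is a linear isomorphism of tangent spaces to turn forward invariance of the unstable cone-system into backward invariance of its complement; the expansion rate for $Df^{-m}$ on $V(\cdot,\theta)$ then comes from the dual estimate analogous to \eqref{3.5.1}. The main technical obstacle will be controlling the accumulated Hölder error $\sum_{k=0}^{m-1}l^\prime_\delta\,\rho^{r-1}e^{-\frac{\chi(r-1)}{2}\min\{k,m-k\}}$ uniformly over all $n\in\mathbb{Z}$ and all shadowing points $y$; this is exactly why $\varepsilon_i$ was chosen below $\chi(r-1)/(2r)$ and why the tracking hypothesis demands geometric decay in $\min\{k,m-k\}$ rather than mere uniform smallness of $\rho$.
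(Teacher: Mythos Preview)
Your strategy is the paper's: one-step cone invariance in charts, iterate, convert norms via \eqref{2.12} using the lower bound on $m$, then handle the chart jump $x_n\to x_{n+1}$ by uniform continuity on $\mathcal{R}^{Df}_\delta$. Two points deserve correction.

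\textbf{Charts along the orbit.} You write ``trivialize over $\mathcal{N}(x_n)$ via $\Psi_{x_n}$'' and then bound the perturbation at step $k$ by $l'(x_n)\,d(f^ky',f^kx_n)^{r-1}$. This cannot be done in the single chart $\Psi_{x_n}$: for $k\ge 1$ the points $f^kx_n$, $f^ky'$ have in general left $\mathcal{N}(x_n)$, so Theorem~\ref{thm 2.3}(iii) is unavailable there. The paper instead works, at each $k$, in the chart $\Psi_{f^kx_n}$ (this is precisely how the cones $U(f^ky',\theta)$ and norms $\|\cdot\|'_{f^ky'}$ are defined in subsection~2.4), after first checking from the tracking hypothesis and the tempering of $l'$ that $f^ky'\in\mathcal{N}(f^kx_n)$. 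The resulting H\"older constant is then $l'(f^kx_n)$, not $l'(x_n)$, and is only bounded by $l'_\delta\,e^{\varepsilon\min\{k,m-k\}}$ via tempering from the two endpoints $x_n,f^mx_n\in\mathcal{R}^{Df}_\delta$. This extra growing factor is exactly what is killed by the decay $e^{-\frac{\chi}{2}(r-1)\min\{k,m-k\}}$, and is the actual reason the condition $\varepsilon<\chi(r-1)/(2r)$ (hence $\varepsilon-\tfrac{\chi}{2}(r-1)<0$) enters; in your written estimate that factor is absent, so your closing remark about $\varepsilon_i$ is disconnected from your own bound.

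\textbf{Stable cone.} Your proposed duality (complement of the open unstable cone) is correct but unnecessary here: unlike in Lemma~\ref{lemma 3.1}, the derivative cocycle is finite-dimensional and invertible, and the roles of $E^u,E^s$ are symmetric under $f\leftrightarrow f^{-1}$. The paper simply says the stable statements are proved analogously, i.e.\ by running the same one-step argument for $\widetilde{f_x}^{-1}$ using the second half of Theorem~\ref{thm 2.3}(ii)--(iii).
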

\begin{proof}
	We will only prove that $D_{y}f^mU({y},\theta)\subset U({f^{m}y},\eta^\prime\theta),$ and $\|D_{y}f^m(u)\|\geq e^{(\chi-4\varepsilon)m}\|u\|$ for $u\in U(y,\theta)$. the other conclusions can be proved analogously.
	
	Fix any $0<\varepsilon<\varepsilon_i,$ $\delta>0,$ let $\widetilde{\eta}^\prime=e^{-2\chi+4\varepsilon}$. Then we have the following claim.
	\begin{claim}
		There exists $0<\widetilde{\rho}_1<{l^\prime}^{-2},$ such that for any $0<\rho<\widetilde{\rho}_1$,  for any $x_{0},f^mx_{0}\in\mathcal{R}^{Df}_\delta$, $y\in M$ with  $d(f^kx_{0},f^{k}y)\leq \rho\cdot e^{-\frac{\chi}{2}\min\{k,m-k\}}$~for~$k=0,\cdots,m$,  we have 
		\[D_{f^ky}f {U}(f^ky,\theta)\subset {U}(f^{k+1}y,\widetilde{\eta}^\prime\theta),\quad \forall 0\leq k\leq m-2. \]
	\end{claim}
	\begin{proof}[Proof  of the Claim]
		For any $0\leq k\leq m-1$, $0<\rho<{l^\prime}^{-2}\leq l^\prime(x_0)^{-2}$, by Theorem \ref{thm 2.3}, 
		\begin{align*}
		d(f^kx_0,f^ky)\leq \rho\cdot e^{-\frac{\chi}{2}\min\{k,m-k\}}  & \leq l^\prime(x_0)^{-2}\cdot e^{-2\varepsilon\min\{k,m-k\}}\\
		                                                               & \leq l^\prime(f^kx_0)^{-2}\leq r(f^kx_0).
		\end{align*}
		Thus $f^ky\in \mathcal{N}(f^kx_0)$.
		Now for any  ${u}={u}_u+{u}_s\in {U}(f^ky,\theta),$ by subsection 2.4, considering  the identification of  $D_{f^kx_0}f=D_{0}(\widetilde{f}_{f^kx_0})$ and  $T_yM=E^u(y)\oplus E^s(y)$ with $\mathbb{R}^d=\widetilde{E}^u(f^kx_0)\oplus \widetilde{E}^s(f^kx_0)$, \eqref{2.10} and \eqref{2.11} give
		\begin{equation}\label{3.7}
		\|D_{f^kx_0}f({u}_u)\|^\prime_{f^{k+1}x_0}\geq e^{\chi-\varepsilon}\|{u}_u \|^\prime_{f^{k}x_0},\quad \|D_{f^kx_0}f({u}_s)\|^\prime_{f^{k+1}x_0}\leq e^{-\chi+\varepsilon}\|{u}_s \|^\prime_{f^{k}x_0}.
		\end{equation}
			Let ${w}=\big(D_{f^ky}{f}-D_{f^kx_0}{f}\big)({u})={w}_u+{w}_s$, where ${w}_u\in {E}^u(f^{k+1}y)$ and ${w}_s\in {E}^s(f^{k+1}y)$. 
		Since $\widetilde{E}^u(f^{k+1}x_0)$ and $\widetilde{E}^s(f^{k+1}x_0)$ are orthogonal, we have $\|{w}_u\|^\prime_{f^{k+1}y}\leq \|{w}\|^\prime_{f^{k+1}y}$.	
		 Then by Theorem \ref{thm 2.3},
		 \begin{equation}\label{3.8}
		 \begin{split}
		 \|{w}_u\|^\prime_{f^{k+1}y}\leq \|{w}\|^\prime_{f^{k+1}y} 
		 & \leq l^\prime(f^{k+1}x_0)\|D_{f^ky}{f}-D_{f^kx_0}{f}\|\cdot\|{u}\|\\
		 & \leq l^\prime e^{\varepsilon\min\{k+1,n-k-1\}}\cdot cd(f^kx_0,f^ky)^{r-1}\cdot\|{u}\|^\prime_{f^{k}y} \\
		 & \leq (1+\theta) l^\prime  e^\varepsilon c\rho^{r-1}\cdot e^{{(\varepsilon-\frac{\chi}{2}(r-1))}\min\{k,n-k\}}\|{u_u}\|^\prime_{f^{k}y}\\
		 & \leq (1+\theta) l^\prime  e^\varepsilon c\rho^{r-1}\|{u_u}\|^\prime_{f^{k}y}, 	 
		 \end{split}
		 \end{equation}
		since $\varepsilon< \frac{\chi(r-1)}{2r}$ and ${u}\in {U}(f^ky,\theta)$. Similarly,
		\begin{equation}\label{3.9}
		\|{w}_s\|^\prime_{f^{k+1}y} \leq (1+\theta) l^\prime  e^\varepsilon c\rho^{r-1}\|{u_u}\|^\prime_{f^{k}y}.
		\end{equation}
		Let
		\begin{align*}
		D_{{f^ky}}f({u})  =\big(D_{{f^ky}}f({u})\big)_u+\big(D_{{f^ky}}f({u})\big)_s
		                  \in {E}^u(f^{k+1}y)\oplus {E}^s(f^{k+1}y).
		\end{align*}
		Then by \eqref{2.9.1}, \eqref{3.7} and \eqref{3.8}
		\begin{equation}\label{3.10}
		\begin{split}
		\|\big(D_{{f^ky}}f({u})\big)_u\|^\prime_{f^{k+1}y} 
		& \geq \|\big(D_{{f^kx_0}}f({u})\big)_u\|^\prime_{f^{k+1}x_0}-\|{w}_u\|^\prime_{f^{k+1}y}\\
        & \geq e^{\chi-\varepsilon}\|{u}_u \|^\prime_{f^{k}x_0}-(1+\theta) l^\prime  e^\varepsilon c\rho^{r-1}\|{u_u}\|^\prime_{f^{k}y},\\
        & \geq e^{\chi-2\varepsilon}\|{u_u}\|^\prime_{f^{k}y},
		\end{split}
		\end{equation}
		if $\rho $ is taken small enough. Similarly, by \eqref{3.7} and \eqref{3.9},  
		 \begin{equation*}
		 \begin{split}
		 \|\big(D_{{f^ky}}f({u})\big)_s\|^\prime_{f^{k+1}y}
		 & \leq \|\big(D_{{f^kx_0}}f({u})\big)_s\|^\prime_{f^{k+1}x_0}+\|{w}_s\|^\prime_{f^{k+1}y}\\
		 & \leq e^{-\chi+\varepsilon}\|{u}_s \|^\prime_{f^{k}x_0}+(1+\theta) cl^\prime  e^\varepsilon\rho^{r-1}\|{u_u}\|^\prime_{f^{k}y},\\
		 & \leq \theta \cdot e^{-\chi+2\varepsilon}\|{u_u}\|^\prime_{f^{k}y},
		 \end{split}
		 \end{equation*}
		 if  $\rho $ is small enough. Thus $\|\big(D_{{f^ky}}f({u})\big)_s\|^\prime_{f^{k+1}y}\leq \widetilde{\eta}^\prime\theta \|\big(D_{{f^ky}}f({u})\big)_u\|^\prime_{f^{k+1}y}$, that is,
		 $D_{{f^ky}}f {U}(f^ky,\theta)\subset {U}(f^{k+1}y,\widetilde{\eta}^\prime\theta).$
		 This proves the claim.
	\end{proof}  
	The claim gives 
	$$D_yf^{m-1} U(y,\theta)\subset U({f^{m-1}y},\widetilde{\eta}^\prime\theta).$$
	Notice that $f^my\in\mathcal{N}(f^mx_0)$. Let $T_{f^{m}y}M=\widetilde{E}^u({f^{m}y})\oplus \widetilde{E}^s({f^{m}y})$ be translated from $T_{f^{m}x_0}M=E^u({f^{m}x_0})\oplus E^s({f^{m}x_0})$, $\|u\|_{f^my}^{\prime\prime}:=\|D_{f^my}(\Psi_{f^mx_0}^{-1})u\|$, and $\widetilde{U}(f^{m}y,\theta):=\{u\in T_{f^{m}y}M:\|\widetilde{u_s}\|^{\prime\prime}_{f^{m}y}\leq\theta\|\widetilde{u_u}\|^{\prime\prime}_{f^{m}y} \}$. Then the claim above also gives   
	$D_{f^{m-1}y}fU(f^{m-1}y,\theta)\subset \widetilde{U}({f^{m}y},\widetilde{\eta}^\prime\theta).$ Thus we have 
	\[D_yf^{m} U(y,\theta)\subset \widetilde{U}({f^{m}y},\widetilde{\eta}^\prime\theta). \]
	By the definition of $\mathcal{R}_\delta^{Df},$   $x\mapsto \Psi_{x}$  and the Oseledets splitting   $TM=E^u\oplus E^s$ are uniformly continuous on the compact set $\mathcal{R}_\delta^{Df}.$ Hence there exists $\widetilde{\eta}^\prime<{\eta}^\prime<1,$ such that
	\[\widetilde{U}({f^my},\widetilde{\eta}^\prime\theta)\subset U({f^my},{\eta}^\prime\theta),\quad \forall d(f^mx_{0},x_{1})\leq \rho,\]
	if $\rho$ is  small enough. Therefore, 
	$$D_{y}f^mU(y,\theta)\subset U({f^{m}y},\eta^\prime\theta).$$
	To show $\|D_{y}f^m(u)\|^\prime_{f^{m}y}\geq e^{(\chi-4\varepsilon)m}\|u\|^\prime_{y}$ for $u\in U(y,\theta)$, taking any   ${v}\in {U}(f^ky,\theta),$ $0\leq k\leq m-2,$ it follows from \eqref{3.10} that 
	\begin{equation*}
	\begin{split}
	\|D_{{f^ky}}f({v})\|^\prime_{f^{k+1}y}\geq \|\big(D_{f^kx}f({v})\big)_u\|^\prime_{f^{k+1}y} 
	& \geq e^{\chi-2\varepsilon}\|{v}_u\|^\prime_{f^{k}y} \\
	& \geq (1+\theta)^{-1}e^{\chi-2\varepsilon}\|{v}\|^\prime_{f^{k}y} \\
	& =    e^{\chi-3\varepsilon}\|{v}\|^\prime_{f^{k}y}.
	\end{split}
	\end{equation*}
	For  ${v}\in {U}(f^{m-1}y,\theta),$  the same reason gives $\|D_{{f^{m-1}y}}f({v})\|^{\prime\prime}_{f^{m}y}\geq  e^{\chi-3\varepsilon}\|{v}\|^\prime_{f^{m-1}y}.$
	Hence for any $u\in U(y,\theta)$, 
	$$\|D_{{y}}f^m({u})\|^{\prime\prime}_{f^{m}y}\geq e^{(\chi-3\varepsilon)m}\|{u}\|^\prime_{y}.$$
	Thus by \eqref{2.12} and $m\geq\frac{\log l^\prime-\log c_1}{\varepsilon},$ we conclude
	\begin{align*}
	\|Df_{y}^m(u)\|  \geq \frac{c_1}{l^\prime(f^mx_0)}e^{(\chi-3\varepsilon)m}\|u\|\geq e^{(\chi-4\varepsilon)m}\|u\|.
	\end{align*}
  This completes the proof of Lemma  \ref{lemma 3.2}.
\end{proof}
\subsection{Construction of hyperbolic horseshoes}
   The aim of this subsection is to  construct a hyperbolic horseshoe $\Lambda$ satisfying the properties listed in Theorem \ref{thm A}.
 
   We begin with producing a separated set with sufficiently large cardinality. For $n\geq1,$ denote by $d_n(x,y)=\max_{0\leq k\leq n-1}d(f^kx,f^ky)$ the dynamical distance on $M$, and denote by $B_n(x,\rho)=\{y\in M: d_n(x,y)\leq \rho \}$ the  $d_n$-balls of radius $\rho$. Let $N_\mu(n,\rho,\bar{\delta})$ be the minimal numbers of $d_n$-balls of radius $\rho$ whose union has measure at least $\bar{\delta}.$ Then by \cite[Theorem 1.1]{Katok}, for any $\bar{\delta}>0,$
   \[h_\mu(f)=\lim\limits_{\rho\to 0}\liminf\limits_{n\to+\infty}\frac{1}{n}\log N_\mu(n,\rho,\bar{\delta}).\]
    Given any $0<\delta <1/2,$ let
   $\Lambda_\delta=\mathcal{R}^{Df}_\delta\cap \mathcal{R}^{\mathcal{A}}_\delta\cap\text{~supp}(\mu).$ Then $\mu(\Lambda_\delta)>1-2\delta>0.$ Take $\bar{\delta}=\frac{1}{2}\mu(\Lambda_\delta),$ then for any given $i\in\mathbb{N}_{k_0},$  $0<\varepsilon<\varepsilon_i$,  there exist $0<\rho_2<\varepsilon/2, N>1,$ such that for any $0<\rho<\rho_2,n\geq N,$ one has
   
   \begin{equation}\label{4.1}
   N_\mu(n,\rho,\bar{\delta})\geq e^{(h_\mu(f)-\varepsilon)n}.
   \end{equation} 
   Fix  a dense subset $\{\varphi_j\}_{j= 1}^\infty$ of the unit sphere of $C(M)$, then it induces a metric on the set of $f$-invariant measures $\mathcal{M}_f(M)$:  $$D(\mu_1,\mu_2)= \sum\limits_{j=1}^{\infty}\frac{|\int \varphi_jd\mu-\int \varphi_jd\nu|}{2^j},\quad \forall \mu_1,\mu_2\in \mathcal{M}_f(M).$$ 
   Take  $J$ large enough such that $\frac{1}{2^{J}}<\frac{\varepsilon}{8}$, and take  $\rho<\min\{\rho_0,\rho_1,\rho_2/2\}$ small enough (where $\rho_0,\rho_1$ are given by \ref{lemma 3.1}  and \ref{lemma 3.2} respectively), such that
   \begin{equation}\label{4.2}
   |\varphi_j(x)-\varphi_j(y)|\leq \varepsilon/4,\quad \forall~ d(x,y)\leq \rho,~ j=1,\cdots,J. 
   \end{equation}
   Since $\Lambda_\delta\subset \mathcal{R}^{Df}_\delta$, by Lemma \ref{thm 2.5}, there exists $0<\beta<\frac{1}{3}\rho$ and finite $(\rho, \beta,\frac{\chi}{2})$-rectangles $R(q_1),\cdots,R(q_t)$, such that $\cup_{j=1}^tB(q_j,\beta)\supset \Lambda_\delta$.
   We consider a partition $\mathcal{P}=\{P_1,\cdots,P_t \}$ of $\Lambda_\delta$, where  
   \[P_1=B(q_1,\beta)\cap\Lambda_\delta,\text{~and~} P_k=B(q_k,\beta)\cap\Lambda_\delta\setminus (\cup_{j=1}^{k-1}P_j), \forall 2\leq k\leq t.\]
   Let
   \begin{align*}
   \Lambda_{\delta,n}  :=  \Big\{  & x\in \Lambda_\delta: \text{~there exists~}  k\in[n, (1+\varepsilon)n]\text{~such that~} f^k(x)\in \mathcal{P}(x) \text{~and~}\\
   & \Big|\frac{1}{m}\sum\limits_{p=0}^{m-1}\varphi_j(f^px)-\int \varphi_j d\mu\Big|\leq \frac{\varepsilon}{4},~~
   \forall m\geq n, 1\leq j\leq J \Big\}.
   \end{align*}
   \begin{claim}
   	$\lim\limits_{n\to\infty}\mu(\Lambda_{\delta,n})=\mu(\Lambda_{\delta}). $
   \end{claim}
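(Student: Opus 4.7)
The plan is to reduce to showing that for $\mu$-almost every $x \in \Lambda_\delta$ one has $x \in \Lambda_{\delta,n}$ for all sufficiently large $n$. Since $\Lambda_{\delta,n} \subset \Lambda_\delta$, this is equivalent to the pointwise convergence $\mathbf{1}_{\Lambda_{\delta,n}} \to \mathbf{1}_{\Lambda_\delta}$ $\mu$-a.e., and dominated convergence (with dominating function $\mathbf{1}_{\Lambda_\delta}$) then yields $\mu(\Lambda_{\delta,n}) \to \mu(\Lambda_\delta)$. The definition of $\Lambda_{\delta,n}$ is the intersection of a ``return'' condition (i)~(existence of $k \in [n, (1+\varepsilon)n]$ with $f^k(x)\in \mathcal{P}(x)$) and a ``Birkhoff'' condition (ii)~(on the test functions $\varphi_j$), which I verify separately.

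For (ii), apply the Birkhoff ergodic theorem to each of the finitely many $\varphi_1,\dots,\varphi_J$: on an $f$-invariant $\mu$-conull set, all $J$ time averages converge to the corresponding integrals. For any such $x$ there exists $n_0(x)$ with $\bigl|\tfrac{1}{m}\sum_{p=0}^{m-1}\varphi_j(f^p x) - \int \varphi_j\,d\mu\bigr| \leq \varepsilon/4$ whenever $m \geq n_0(x)$ and $1 \leq j \leq J$, so (ii) holds for every $n \geq n_0(x)$.

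For (i), fix a partition element $P_j \in \mathcal{P}$ with $\mu(P_j) > 0$ and apply Birkhoff to the indicator $\mathbf{1}_{P_j}$: on a $\mu$-conull set, $S_N(x) := \sum_{k=0}^{N-1}\mathbf{1}_{P_j}(f^k x) = N\mu(P_j)(1+o(1))$ as $N \to \infty$. Consequently, the number of $k \in [n, (1+\varepsilon)n]$ with $f^k(x) \in P_j$ equals $S_{\lfloor (1+\varepsilon)n \rfloor + 1}(x) - S_n(x) = \varepsilon\, n\, \mu(P_j)(1+o(1)) \to +\infty$, and is in particular $\geq 1$ for all $n$ beyond some $n_1(x)$. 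Intersecting these full-measure sets over the finitely many $P_j$ with $\mu(P_j)>0$ yields a $\mu$-conull subset of $\Lambda_\delta$ on which, for the unique $P_j \ni x$ (so $\mathcal{P}(x) = P_j$), condition~(i) holds for every $n \geq n_1(x)$. Combined with the set from (ii), this gives the desired pointwise convergence. The only mildly delicate point is that the target element $\mathcal{P}(x)$ in (i) varies with $x$; this is handled cleanly by decomposing $\Lambda_\delta$ along the finite partition $\mathcal{P}$ and invoking the ergodic theorem once per indicator $\mathbf{1}_{P_j}$.
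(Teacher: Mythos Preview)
Your proof is correct and follows essentially the same strategy as the paper's. Both arguments split $\Lambda_{\delta,n}$ into the return condition and the Birkhoff condition, handle the latter directly via Birkhoff's theorem for the finitely many $\varphi_j$, and handle the former by applying Birkhoff to the indicators $\mathbf{1}_{P_j}$ and counting visits in the window $[n,(1+\varepsilon)n]$ as a difference of ergodic sums; the paper carries this out by introducing the auxiliary sets $A_{n,j}^{\tau}$ and tracking their measures, while you phrase the same computation as pointwise convergence of indicators plus dominated convergence, but the underlying estimate $S_{\lfloor(1+\varepsilon)n\rfloor+1}-S_n \sim \varepsilon n\mu(P_j)$ is identical.
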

   \begin{proof}[Proof of the Claim.]
   	Let
   	$$A_n =\left\{ x\in \Lambda_\delta: \text{~there exists~}  k\in[n, (1+\varepsilon)n]\text{~such that~}  f^k(x)\in \mathcal{P}(x)\right\},$$
   	and  $A_{n,j} =\{ x\in P_j: \text{~there exists~}  k\in[n, (1+\varepsilon)n]\text{~such that~}  f^k(x)\in P_j\}.$  Then $A_n=\cup_{j=1}^tA_{n,j}$. 
   	Considering  $P_1$, we may assume $\mu(P_1)>0$. For any $\tau>0,$ let
   	\[A_{n,1}^{\tau}=\left\{x\in P_1: \mu(P_1)-\tau\leq\frac{1}{m}\sum_{j=0}^{m-1}\chi_{P_1}(f^jx)\leq \mu(P_1)+\tau, \forall m\geq n   \right\}. \]   
   	Then Birkhoff Ergodic Theorem gives $\mu(\cup_{n\geq 1}A_{n,1}^{\tau})=\mu(P_1)$. Since $A_{1,1}^{\tau}\subset A_{2,1}^{\tau}\subset\cdots,$ we have
   	\[\lim\limits_{n\to\infty}\mu(A_{n,1}^{\tau})=\mu(P_1). \]
   	Take $\tau<\frac{\varepsilon}{2+\varepsilon}\mu(P_1)$, then  for any $x\in A_{n,1}^{\tau}$,  by the definition of $ A_{n,1}^{\tau}$, 
   	\begin{align*}
   	\text{card}\{k\in [n,n+n\varepsilon]:f^k(x)\in P_1 \} & \geq (\mu(P_1)-\tau)(n+n\varepsilon)-(\mu(P_1)+\tau)n\\
   	& = n(\mu(P_1)\varepsilon-2\tau-\tau\varepsilon)\\
   	& \geq 1,
   	\end{align*}
   	if n is taken large enough. Thus $x\in A_{n,1}$, that is $A_{n,1}^{\tau}\subset A_{n,1}$. Therefore, $\lim\limits_{n\to\infty}\mu(A_{n,1})=\mu(P_1).$ Reproduce the proof above for every $P_j$,  then we conclude
   	\begin{equation}\label{4.3}
   	\lim\limits_{n\to\infty}\mu(A_n)=\mu(\Lambda_{\delta}).
   	\end{equation}
   	Let
   	\[B_n=\Big\{x\in \Lambda_\delta:\Big|\frac{1}{m}\sum\limits_{k=0}^{m-1}\varphi_j(f^kx)-\int \varphi_j d\mu\Big|\leq \frac{\varepsilon}{4}, \forall m\geq n, 1\leq j\leq J \Big \}.\]
   	Then by Birkhoff Ergodic Theorem,  $\mu(\cup_{n\geq 1}B_n)=\mu(\Lambda_{\delta})$. Since $B_1\subset B_2\subset\cdots,$ one has
   	\[\lim\limits_{n\to\infty}\mu(B_n)=\mu(\Lambda_{\delta}). \]  
   	Together with \eqref{4.3}, 
   	\[\lim\limits_{n\to\infty}\mu(\Lambda_{\delta,n})=\lim\limits_{n\to\infty}\mu(A_n\cap B_n) =\mu(\Lambda_{\delta}).\]
   	This proves the Claim.	
   \end{proof}
   
   Choose $n>\max\{\frac{\log l}{\varepsilon},\frac{\log l^\prime-\log c_1}{\varepsilon},N, \frac{1}{\varepsilon}\log t \}$ large enough such that  $\mu(\Lambda_{\delta,n})>\frac{1}{2}\mu(\Lambda_{\delta})=\bar{\delta},$  and $n\varepsilon+1<e^{\varepsilon n}$.   Denote by $E$  an $(n,2\rho)$-separated set of $\Lambda_{\delta,n}$ of maximum cardinality.  Then $\cup_{x\in E}B_n(x,2\rho)\supset \Lambda_{\delta,n}.$ By \eqref{4.1},
   \[ \text{card}(E)\geq N_\mu(n,2\rho,\mu(\Lambda_{\delta,n}))\geq N_\mu(n, 2\rho,\bar{\delta})\geq e^{(h_\mu(f)-\varepsilon)n}. \]
   For $ k\in [n, n(1+\varepsilon)]$, let $F_k=\{x\in E:f^k(x)\in \mathcal{P}(x)\}$. And take $m\in [n, n(1+\varepsilon)]$ satisfying $\text{card}(F_m)=\max \{\text{card}(F_k):n\leq k\leq n(1+\varepsilon) \}.$ Then
   \[\text{card}(F_m)\geq \frac{1}{n\varepsilon+1}\text{card}(E)\geq  \frac{1}{n\varepsilon+1}e^{(h_\mu(f)-\varepsilon)n}\geq  e^{(h_\mu(f)-2\varepsilon)n}.\]
   Choose $P\in \mathcal{P}$ satisfying $\text{card}(F_m\cap P)=\max \{\text{card}(F_m\cap P_k):1\leq k\leq t\}.$ Then
   \[\text{card}(F_m\cap P)\geq \frac{1}{t}\text{card}(F_m)\geq \frac{1}{t}e^{(h_\mu(f)-2\varepsilon)n}.\]
  By getting rid of some points in $F_m\cap P$, we may assume
   \begin{equation}\label{4.4}
   \frac{1}{t}e^{(h_\mu(f)-2\varepsilon)n}\leq \text{card}(F_m\cap P)\leq\frac{1}{t}e^{(h_\mu(f)+2\varepsilon)n}.
   \end{equation}

   By the definition of the  partition $\mathcal{P}$ , there exists $q\in \{q_1,\cdots,q_t\}$, such that $P\subset B(q,\beta)\cap \Lambda_\delta$. Thus for any $x\in F_m\cap P,$ since $x,f^m(x)\in B(q,\beta)\cap \Lambda_\delta,$ by Definition \ref{def 2.4},  the connected component $C\big(x,R(q)\cap f^{-m}R(q)\big)$ of $R(q)\cap f^{-m}R(q)$ containing $x$ is an admissible $s$-rectangle in $R(q)$, and $f^mC\big(x,R(q)\cap f^{-m}R(q)\big)$ is an admissible $u$-rectangle in $R(q)$.     
   
   We claim that if $x_1,x_2\in F_m\cap P$ with $x_1\neq x_2,$ then $C\big(x_1,R(q)\cap f^{-m}R(q)\big)\cap C\big(x_2,R(q)\cap f^{-m}R(q)\big)=\varnothing$. Indeed, if there is
   $y\in C\big(x_1,R(q)\cap f^{-m}R(q)\big)\cap C\big(x_2,R(q)\cap f^{-m}R(q)\big),$
    by Definition \ref{def 2.4}, one sees $d(f^kx_j,f^ky)\leq \rho$  for any $0\leq k\leq m$ and $j=1,2$.
   Thus $d_m(x_1,x_2)\leq 2\rho$. However,  since $F_m\cap P$ is an $(n,2\rho)$-separated set, we obtain $d_m(x_1,x_2)\geq d_n(x_1,x_2)>2\rho$, which is a contradiction. Therefore, there are at least   $\text{card}( F_m\cap P)$ disjoint s-rectangles in $R(q)$, mapped by $f^m$ to $\text{card}(F_m\cap P)$  disjoint  admissible u-rectangles in $R(q)$.

   Let 
   \[\Lambda^*=\bigcap\limits_{n\in\mathbb{Z}}f^{-mn}\Big(\bigcup\limits_{x\in F_m\cap P}C\big(x,R(q)\cap f^{-m}R(q)\big) \Big).\]
   Then $f^m|_{\Lambda^*}$ is conjugate to a full shift in $\text{card}(F_m\cap P)$-symbols. And for any $y\in \Lambda^*,$ any $n\in \mathbb{Z}$, there exists $x_n\in F_m\cap P$ such that $f^{mn}(y)\in C\big(x_n,R(q)\cap f^{-m}R(q)\big).$ It follows that $y$ and $(x_n)_{n\in\mathbb{Z}}$ satisfy the conditions of  Lemma \ref{lemma 3.2}.
   Thus by Lemma \ref{lemma 3.2} and \cite[Theorem 6.1.2]{barreira000pesin}, $\Lambda^*$ is hyperbolic for $f^m.$ Let 
   \[\Lambda=\Lambda^*\cup f(\Lambda^*)\cup\cdots\cup f^{m-1}(\Lambda^*).\]
   Then $\Lambda$ is a hyperbolic horseshoe.
   
    It remains to show the conclusions $\text{(\romannumeral1)}-\text{(\romannumeral4)}$ of Theorem \ref{thm A} hold  for this $\Lambda$. 
   
   $\text{(\romannumeral1)}$ Since
   $$h_{\text {\em top}}(f|_{\Lambda})=\frac{1}{m}h_{\text {\em top}}(f^m|_{\Lambda^*})=\frac{1}{m}\log \text{card}(F_m\cap P),$$
   by \eqref{4.4},
   $h_{\text {\em top}}(f|_{\Lambda})\geq -\frac{1}{m}\log t +  \frac{n}{m}(h_\mu(f)-2\varepsilon).$
   Since  $\frac{1}{\varepsilon}\log t< n\leq m\leq n(1+\varepsilon)$, 
   \[h_{\text {\em top}}(f|_{\Lambda})\geq -\varepsilon+\frac{1}{1+\varepsilon}(h_\mu(f)-2\varepsilon)\geq h_\mu(f)-( h_\mu(f)+3)\varepsilon.\]
   By \eqref{4.4}, we also have
   \[ h_{\text {\em top}}(f|_{\Lambda})\leq -\frac{1}{m}\log t +  \frac{n}{m}(h_\mu(f)+2\varepsilon)\leq h_\mu(f)+2\varepsilon\leq h_\mu(f)+( h_\mu(f)+3)\varepsilon.\]

   $\text{(\romannumeral2)}$ By the construction of $\Lambda$ and the definition of  $R(q)$, for any $y\in \Lambda,$ there exist $x\in F_m\cap P, 0\leq k\leq m-1$  such that  $d(y,f^kx)\leq\rho<\varepsilon/2.$ Since $x\in \Lambda_\delta\subset\text{~supp}(\mu),$ We conclude $\Lambda$ is contained in an  $\varepsilon/2$-neighborhood of  supp$(\mu)$.
   
   $\text{(\romannumeral3)}$  For any $f$-invariant measure $\nu$ supported on $\Lambda$, we may assume $\nu$ is ergodic first. Since 
   $$D(\mu,\nu)\leq \sum\limits_{j=1}^{J}\frac{|\int \varphi_jd\mu-\int \varphi_jd\nu|}{2^j}+\frac{1}{2^{J-1}}\leq \sum\limits_{j=1}^{J}\frac{|\int \varphi_jd\mu-\int \varphi_jd\nu|}{2^j}+\frac{\varepsilon}{4},$$
    it's enough to show: $|\int \varphi_jd\mu-\int \varphi_jd\nu|\leq \frac{3}{4}\varepsilon$, $\forall 1\leq j\leq J.$ Take $y\in \Lambda^*$ and $s\in \mathbb{N}$  large enough such that
   \[\Big|\frac{1}{ms}\sum\limits_{k=0}^{ms-1}\varphi_j(f^ky)-\int \varphi_j d\nu\Big|\leq \frac{\varepsilon}{4},~ 1\leq j\leq J.\]
   Then there exist $x_0,x_1,\cdots,x_{s-1}\in F_m\cap P$ such that  
   $$d(f^{mk+t}y,f^tx_k)\leq \rho,\quad\forall 0\leq k \leq s-1, 0\leq t\leq m-1.$$ 
   By \eqref{4.2} and the construction of $\Lambda_{\delta,m}$, we obtain  $|\int \varphi_jd\mu-\int \varphi_jd\nu|\leq \frac{3}{4}\varepsilon$, $1\leq j\leq J,$  which implies $D(\mu,\nu)\leq \varepsilon$.
   
   If $\nu$ is not ergodic, by the ergodic decomposition theorem, $\nu$-almost every ergodic component is supported on $\Lambda$. Hence
   \begin{align*}
   D(\mu,\nu)  = \sum\limits_{j=1}^{\infty}\frac{|\int \varphi_jd\mu-\int \varphi_jd\nu|}{2^j} 
   & =  \sum\limits_{j=1}^{\infty}\frac{|\int(\int \varphi_jd\mu-\int \varphi_jd\nu_x)d\nu(x)|}{2^j} \\
   &\leq   \int D(\mu,\nu_x) d\nu(x)  \\
   & \leq \varepsilon.
   \end{align*}
              
\subsection{Dominated splitting for cocycles } The  conclusion $\text{(\romannumeral4)}$ of Theorem \ref{thm A} is contained in the following Proposition.

\begin{proposition}\label{prop 3.3}
   Under the condition of Theorem \ref{thm A}, for any  $i\in\mathbb{N}_{k_0},$  any $0<\varepsilon<\varepsilon_i$, let $\Lambda$ be constructed as above. Then there exists a continuous $\mathcal{A}$-invariant splitting on $\Lambda$ 
   \[X=E_1(y)\oplus\cdots\oplus E_i(y)\oplus F_i(y),\]
   with~$\dim(E_j)=d_j$ for $1\leq j\leq i$, such that for any $y\in \Lambda$, one has 
   \[e^{(\lambda_j-4\varepsilon)m}\|u\|\leq \|\mathcal{A}_y^m(u)\|\leq e^{(\lambda_j+4\varepsilon)m}\|u\|,\quad \forall u\in E_j(y), 1\leq j\leq i, \]
   \[\|\mathcal{A}_y^m(v)\|\leq e^{(\lambda_{i+1}+4\varepsilon)m}\|v\|,\quad \forall v\in F_i(y). \]
\end{proposition}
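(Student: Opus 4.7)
My plan is to construct the splitting first on $\Lambda^*$ as an $\mathcal{A}^m$-invariant object using the cone invariance of Lemma \ref{lemma 3.1} along the shadowing pseudo-orbits encoded by the symbolic dynamics, then extend to $\Lambda = \bigcup_{k=0}^{m-1} f^k(\Lambda^*)$ by applying the cocycle. Each $y \in \Lambda^*$ is canonically assigned a bi-infinite sequence $(x_n)_{n \in \mathbb{Z}} \subset F_m \cap P$ with $f^{nm}y \in C(x_n, R(q) \cap f^{-m}R(q))$, and the $(\rho,\beta,\chi/2)$-rectangle property (Definition \ref{def 2.4}(iii)) supplies the shadowing estimate $d(f^k x_n, f^{nm+k}y) \leq \rho \cdot e^{-(\chi/2)\min\{k, m-k\}}$ that Lemma \ref{lemma 3.1} requires along every block.

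For each $1 \leq j \leq i$, form the $(d_1+\cdots+d_j)$-dimensional subspaces $W_n(y) := \mathcal{A}^{nm}_{f^{-nm}y}(H_j(x_{-n}))$, where $H_j(x) := E_1(x) \oplus \cdots \oplus E_j(x)$. Lemma \ref{lemma 3.1}(i) places each $W_n(y)$ inside the cone $U_j(x_0, \eta\theta_0)$; writing them as graphs over $H_j(x_0)$ in the direction of $F_j(x_0)$, the relative contraction of the cocycle in $F_j$ versus $H_j$ (of rate $e^{(\lambda_{j+1}-\lambda_j+O(\varepsilon))m}$ per $m$-block) makes $\{W_n(y)\}$ a geometric Cauchy sequence in the Grassmannian of $(d_1+\cdots+d_j)$-dim subspaces of $X$, converging to a $(d_1+\cdots+d_j)$-dim subspace $H_j(y)$. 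Dually, construct $F_j(y)$ as the unique closed linear subspace in $V_j(x_0, \eta\theta_0)$ obtained as the fixed point of the forward graph transform on bounded linear maps $F_j(x_0) \to H_j(x_0)$ (the same exponential contraction makes this transform a strict contraction, so the fixed point exists and is unique). Setting $E_j(y) := H_j(y) \cap F_{j-1}(y)$ with $F_0(y) := X$, the cone transversality $U_j \cap V_{j-1} = \{0\}$ together with the uniform Lyapunov-norm bound $l_\delta$ from \eqref{2.9} yields the Banach direct sum
\[X = E_1(y) \oplus \cdots \oplus E_i(y) \oplus F_i(y)\]
with $\dim E_j(y) = d_j$ and uniformly bounded projections onto each factor.

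Invariance $\mathcal{A}^m_y E_j(y) = E_j(f^m y)$ and $\mathcal{A}^m_y F_j(y) \subset F_j(f^m y)$ are direct from the definitions of the subspaces as limits/fixed points of cocycle-compatible processes. For the growth bounds, $u \in E_j(y) \subset H_j(y) \subset U_j(x_0, \theta_0)$ satisfies $\|\mathcal{A}^m_y u\| \geq e^{(\lambda_j - 4\varepsilon)m}\|u\|$ by Lemma \ref{lemma 3.1}(i); the inclusion $E_j(y) \subset F_{j-1}(y) \subset \mathcal{A}^{-m}_{f^m y}V_{j-1}(x_1, \theta_0)$ combined with Lemma \ref{lemma 3.1}(ii) applied at index $j-1$ (top exponent $\lambda_j$) gives the matching upper bound $\|\mathcal{A}^m_y u\| \leq e^{(\lambda_j + 4\varepsilon)m}\|u\|$; the estimate on $F_i(y)$ is Lemma \ref{lemma 3.1}(ii) directly. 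Continuity on $\Lambda^*$ follows from the uniform geometric convergence of the defining limits and the continuity of the Lyapunov norm and Oseledets splitting on $\mathcal{R}^{\mathcal{A}}_\delta$ established in Section 2.3; the extension to $\Lambda$ is by $E_j(f^k y) := \mathcal{A}^k_y(E_j(y))$ for $y \in \Lambda^*$ and $0 \leq k \leq m-1$, which is $\mathcal{A}$-invariant in the $f$-sense. The main obstacle is the infinite-dimensional Banach setting: under iteration of Lemma \ref{lemma 3.1} the cones do not shrink beyond parameter $\eta\theta_0$, so $H_j(y)$ and $F_j(y)$ cannot be read off as simple cone intersections but must be extracted via genuine graph-transform fixed-point arguments, and proving $F_j(y)$ is a \emph{closed topological} complement of $H_j(y)$ (not merely an algebraic one) requires uniform quantitative control of the transversality via $l_\delta$ to produce the Banach direct sum with bounded projection.
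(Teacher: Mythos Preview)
Your proposal is correct and follows essentially the same route as the paper: build $H_j(y)$ and $F_j(y)$ on $\Lambda^*$ from the cone invariance of Lemma~\ref{lemma 3.1} along the shadowing pseudo-orbit, set $E_j=H_j\cap F_{j-1}$, and read off the growth bounds. The paper differs only in packaging: it obtains $F_j(y)$ as the Grassmannian limit of the preimages $\mathcal{A}^{-mn}_{f^{mn}y}F_j(x_n)$ rather than via a graph-transform fixed point, it extends to $z=f^ky\in\Lambda$ by repeating the limit construction with the shifted data $f^kx_{\pm n}$ rather than by pushforward, and it proves continuity by estimating $\|\pi^{H_j}_z|_{F_j(\tilde z)}\|$, $\|\pi^{F_j}_z|_{H_j(\tilde z)}\|$ directly from the invariance and growth bounds together with \cite[Remark~10]{BM2015}. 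One slip in your write-up: the claimed transversality $U_j\cap V_{j-1}=\{0\}$ is false (indeed $E_j(x)$ sits in both cones); what gives the direct sum $X=H_j(y)\oplus F_j(y)$ is $U_j\cap V_j=\{0\}$, and the $j=1$ upper bound comes straight from Lemma~\ref{lemma 3.1}(i) rather than from a nonexistent index-$0$ case of (ii).
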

   In order to prove this Proposition, we require the following definitions. Let $\mathcal{G}(X)$ denote the Grassmannian of closed subspaces of $X$, endowed with the Hausdorff metric $d_H$, defined by: 
   \[d_H(E,F) = \max\{\sup\limits_{u\in S_E}dist(u,S_F),\sup\limits_{v\in S_F}dist(v,S_E) \},\quad\forall E,F\in \mathcal{G}(\mathfrak{X}),\]
   where $S_F=\{v\in F:\|v\|=1 \}$, $dist(u,S_F)=\inf\{\|u-v\|:v\in S_F\}$. Denote by $\mathcal{G}_j({X})$,  $\mathcal{G}^j({X})$  the  Grassmannian of $j$-dimensional and j-codimensional closed subspaces, respectively. Then by \cite[chapter \uppercase\expandafter{\romannumeral4}, \S2.1 ]{Kato},  $(\mathcal{G}({X}),d_H)$ is a complete metric space, and  $\mathcal{G}_j({X})$ , $\mathcal{G}^j({X})$ are closed in $\mathcal{G}({X})$. In order to compute $d_H$ conveniently, we introduce the \emph{gap} $\hat{\delta}$, defined by 
   \[ \hat{\delta}(E,F) = \max\{\sup\limits_{u\in S_E}dist(u,F),\sup\limits_{v\in S_F}dist(v,E) \},\quad\forall E,F\in \mathcal{G}({X}).\]
  Note that the gap  is not a metric on $\mathcal{G}({X})$,  but 
   $$\hat{\delta}(E,F)\leq d_H(E,F)\leq 2\hat{\delta}(E,F).$$
   See \cite[chapter \uppercase\expandafter{\romannumeral4}, \S2.1 ]{Kato} for a proof.
   
   We now prove Proposition \ref{prop 3.3}.
   
   \begin{proof}
   	 Fix any $i\in\mathbb{N}_{k_0}$, $0<\varepsilon<\varepsilon_i$ and $1\leq j\leq i$. We first assume $y\in \Lambda^*,$  then  by the definition of $\Lambda^*$, there exist $\{x_n\}_{n\in \mathbb{Z}}\subset F_m$,  such that  
   	 $$d(f^kx_{n},f^{k}(f^{nm}y))\leq \rho\cdot e^{-\frac{\chi}{2}\min\{k,m-k\}}, ~\forall n\in \mathbb{Z},k=0,\cdots,m.$$ 

   	We first prove $\{\mathcal{A}_{f^{-mn}y}^{mn}H_j(x_{-n})\}_{n\geq 1}$ is a Cauchy sequences in $\mathcal{G}({X})$.
   	Denote $\widetilde{H}_j(x_{-n})=\mathcal{A}_{f^{-m(n+1)}y}^{m}H_j(x_{-n-1}).$ Then
   	$$\mathcal{A}_{f^{-mn}y}^{mn}\widetilde{H}_j(x_{-n})=\mathcal{A}_{f^{-m(n+1)}y}^{m(n+1)}H_j(x_{-n-1}).$$
    By Lemma  \ref{lemma 3.1}, $\mathcal{A}_{f^{-mn}y}^{mn}\widetilde{H}_j(x_{-n})\subset U_j(x_{0},\theta_0)$. Since $A(x)$ is injective for every $x\in M$, we have  \begin{equation}\label{3.18}
    {X}=\mathcal{A}_{f^{-mn}y}^{mn}\widetilde{H}_j(x_{-n})\oplus F_j(x_0).
    \end{equation} 	
   	Now for any $u\in H_j(x_{-n})$, 
   	\begin{equation}\label{3.19}
   	\begin{split}
   	  & dist\left(\frac{\mathcal{A}_{f^{-mn}y}^{mn}(u)}{\|\mathcal{A}_{f^{-mn}y}^{mn}(u)\|},\mathcal{A}_{f^{-m(n+1)}y}^{m(n+1)}H_j(x_{-n-1})\right)\\
   	= & ~ \inf_{v\in \widetilde{H}_j(x_{-n}) }\left\{\big\|\frac{\mathcal{A}_{f^{-mn}y}^{mn}(u)}{\|\mathcal{A}_{f^{-mn}y}^{mn}(u)\|}- \mathcal{A}_{f^{-mn}y}^{mn}(v)\big\| \right\}\\
   	= & ~ \frac{\|u\|}{\|\mathcal{A}_{f^{-mn}y}^{mn}(u)\|}\inf_{v\in \widetilde{H}_j(x_{-n})  }\left\{\|\mathcal{A}_{f^{-mn}y}^{mn}(\frac{u}{\|u\|}- v)\| \right\}.   	
   	\end{split}
   	\end{equation}
   	By \eqref{3.18},  for  $\mathcal{A}_{f^{-mn}y}^{mn}\frac{u}{\|u\|}\in {X}$, there exists $v_n\in \widetilde{H}_j(x_{-n})$ such that
   	\begin{equation*}
   	\mathcal{A}_{f^{-mn}y}^{mn}(\frac{u}{\|u\|})-\mathcal{A}_{f^{-mn}y}^{mn}(v_n)\in  F_j(x_0)\subset V_j(x_0,\theta_0).
   	\end{equation*}  
   	Then it follows from \eqref{3.19} and Lemma  \ref{lemma 3.1}  that
  \begin{align*}
   	      & dist\left(\frac{\mathcal{A}_{f^{-mn}y}^{mn}(u)}{\|\mathcal{A}_{f^{-mn}y}^{mn}(u)\|},\mathcal{A}_{f^{-m(n+1)}y}^{m(n+1)}H_j(x_{-n-1})\right)\\
   	 \leq &~ \frac{\|u\|}{\|\mathcal{A}_{f^{-mn}y}^{mn}(u)\|}\cdot\|\mathcal{A}_{f^{-mn}y}^{mn}(\frac{u}{\|u\|}- v_n)\|\\
     \leq & ~ e^{(\lambda_{j+1}-\lambda_j+8\varepsilon)mn}\cdot \|\frac{u}{\|u\|}- v_n\|.
  \end{align*}  
  	\begin{claim}
  		There exists  $c=c(\delta)>0$, such that 	$\|\frac{u}{\|u\|}- v_n\|\leq c.$
  	\end{claim}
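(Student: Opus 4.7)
The plan is to bound $w := u/\|u\| - v_n$ by exploiting two complementary cone memberships at $x_{-n}$, one coming from the forward invariance of $U_j$-cones and one coming from the backward contraction of $V_j$-cones. The uniform constant $c(\delta)$ will then emerge after bootstrapping and converting from the Lyapunov norm $\|\cdot\|_{x_{-n}}$ to the ambient norm via the bound $K(x_{-n}) \le l$ valid on $\mathcal{R}^\mathcal{A}_\delta$.

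First, I would check that $v_n \in U_j(x_{-n},\eta\theta_0)$. Indeed, $v_n \in \widetilde{H}_j(x_{-n}) = \mathcal{A}_{f^{-m(n+1)}y}^{m}H_j(x_{-n-1})$, and since $H_j(x_{-n-1})$ is trivially contained in $U_j(x_{-n-1},\theta_0)$ (its $F$-component is zero), Lemma \ref{lemma 3.1}(i) applied along the pseudo-orbit gives $v_n \in U_j(x_{-n},\eta\theta_0)$. Next, by the choice of $v_n$, the vector $\mathcal{A}_{f^{-mn}y}^{mn}(w)$ lies in $F_j(x_0) \subset V_j(x_0,\theta_0)$. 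Iterating Lemma \ref{lemma 3.1}(ii) backwards $n$ times along the pseudo-orbit, we conclude that $w \in V_j(x_{-n},\theta_0)$ (the contraction factor $\eta$ makes each inclusion absorb the previous one into $V_j(\cdot,\theta_0)$).

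With both cone conditions in hand, decompose $w = w_H + w_F$ and $v_n = (v_n)_H + (v_n)_F$ according to $X = H_j(x_{-n}) \oplus F_j(x_{-n})$. Since $u/\|u\| \in H_j(x_{-n})$, equating components gives $(v_n)_H = u/\|u\| - w_H$ and $(v_n)_F = -w_F$. The cone inequality from $v_n \in U_j(x_{-n},\eta\theta_0)$ reads $\|w_F\|_{x_{-n}} \le \eta\theta_0\|u/\|u\| - w_H\|_{x_{-n}}$, and the cone inequality from $w \in V_j(x_{-n},\theta_0)$ reads $\|w_H\|_{x_{-n}} \le \theta_0\|w_F\|_{x_{-n}}$. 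Using $\|u/\|u\|\|_{x_{-n}} \le K(x_{-n}) \le l$ (from \eqref{2.6} and \eqref{2.9}) and feeding the second inequality into the first yields
\[
\|w_F\|_{x_{-n}} \le \frac{l\,\eta\theta_0}{1-\eta\theta_0^2},
\qquad
\|w\|_{x_{-n}} \le (1+\theta_0)\|w_F\|_{x_{-n}} \le \frac{l\,\eta\theta_0}{1-\eta\theta_0}.
\]
Finally, the inequality $\|w\| \le \|w\|_{x_{-n}}$ from \eqref{2.6} gives the desired bound with $c = c(\delta) := l\eta\theta_0/(1-\eta\theta_0)$, noting that $\eta\theta_0 < 1$ since $\eta < 1$ and $\theta_0 < e^\varepsilon - 1 < 1$.

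The only mildly delicate point is the backward iteration in the second step: one must verify that the pseudo-orbit structure is compatible with applying Lemma \ref{lemma 3.1}(ii) at each step $k = 0, -1, \ldots, -(n-1)$, i.e., that the pseudo-orbit condition $d(f^m x_k, x_{k+1}) \le \rho$ and the shadowing estimate on $y$ hold at each shift. This follows because $(x_k)_{k \in \mathbb{Z}}$ is a genuine (bi-infinite) $\rho$-pseudo-orbit in $\mathcal{R}^\mathcal{A}_\delta$ coming from the horseshoe construction, so the hypotheses of Lemma \ref{lemma 3.1} hold at every shift.
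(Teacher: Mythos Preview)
Your argument is correct and takes a genuinely different route from the paper. The paper identifies $u/\|u\|-v_n$ as the image of $u/\|u\|$ under the projection $\pi_2$ associated with the splitting $X=\widetilde{H}_j(x_{-n})\oplus\mathcal{A}_y^{-mn}F_j(x_0)$, and then invokes a uniform bound \eqref{3.21} on the norms of projections for any splitting $X=H\oplus F$ with $H\subset U_j(x,\theta_0)$ and $F\subset V_j(x,\theta_0)$, $x\in\mathcal{R}^\mathcal{A}_\delta$. You instead work directly with the cone inequalities in the Lyapunov norm and close a short bootstrapping loop, which is more elementary and yields an explicit constant. The paper's packaging has the advantage that the bound \eqref{3.21} is reused later (in the continuity claim for the splitting on $\Lambda$), whereas your computation is self-contained for this claim only.

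Two small cosmetic points. First, the final arithmetic step $(1+\theta_0)\,\dfrac{l\eta\theta_0}{1-\eta\theta_0^2}\le \dfrac{l\eta\theta_0}{1-\eta\theta_0}$ is false (it would require $\theta_0(1-\eta)\le 0$); the clean bound is $(1+\theta_0)\,\dfrac{l\eta\theta_0}{1-\eta\theta_0^2}\le \dfrac{l\theta_0}{1-\theta_0}$, obtained by using $\eta<1$. Second, your displayed $c=l\eta\theta_0/(1-\eta\theta_0)$ depends on $\eta=\eta(\varepsilon,\delta,m)$, hence on $m$; replacing $\eta$ by $1$ as above gives a constant depending only on $l=l_\delta$ and $\theta_0$, hence only on $\delta$ (for fixed $\varepsilon,i$), as required.
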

  	\begin{proof}[Proof of the Claim]
  	Using \eqref{3.18},  we obtain 
  	\begin{equation}\label{3.20}
  	{X}=\widetilde{H}_j(x_{-n})\oplus \mathcal{A}_y^{-mn}F_j(x_0). 
  	\end{equation}
  	Indeed, define $T:X\to X$ by $T=\big(\mathcal{A}_{f^{-mn}y}^{mn}|_{\widetilde{H}_j(x_{-n})} \big)^{-1}\circ \pi_1\circ\mathcal{A}_{f^{-mn}y}^{mn}$, where $\pi_1$ is the  projection associated with \eqref{3.18} onto $\mathcal{A}_{f^{-mn}y}^{mn}\widetilde{H}_j(x_{-n})$ parallel to $F_j(x_0)$. Then T  has image $\widetilde{H}_j(x_{-n})$ and kernel $\mathcal{A}_y^{-mn}F_j(x_0)$, which implies \eqref{3.20}.
  	
  	Let $\pi_2$ be the  projection operator associated with \eqref{3.20} onto  $\mathcal{A}_y^{-mn}F_j(x_0)$ parallel to $\widetilde{H}_j(x_{-n})$ . Then 
  	$$\frac{u}{\|u\|}- v_n=\pi_2(\frac{u}{\|u\|})\in \mathcal{A}_y^{-mn}F_j(x_0).$$
  	 Since the splitting ${X}=H_j(x)\oplus F_j(x)$ is uniformly continuous on $\mathcal{R}^\mathcal{A}_\delta$, there exists $c=c(\delta)>0$ such that for any $x\in\mathcal{R}^\mathcal{A}_\delta$ and any $H\subset U_j(x,\theta_0), F\subset V_j(x,\theta_0)$ with $\mathfrak{X}=H\oplus F$, one has  	
  	\begin{equation}\label{3.21}
  	 \|\pi^H(x)\|\leq c,\|\pi^F(x)\|\leq c,
  	\end{equation}
  	  where $\pi^H,$ $\pi^F$ are the  projections   associated with the splitting  ${X}=H_j(x)\oplus F_j(x)$.  Therefore, 
  	\[\|\frac{u}{\|u\|}- v_n\|=\|\pi_2(\frac{u}{\|u\|})\|\leq c. \]
  	This proves the claim.
  \end{proof}
  	The Claim gives
   	\begin{align*}
   	      dist\left(\frac{\mathcal{A}_{f^{-mn}y}^{mn}(u)}{\|\mathcal{A}_{f^{-mn}y}^{mn}(u)\|},\mathcal{A}_{f^{-m(n+1)}y}^{m(n+1)}H_j(x_{-n-1})\right)
   	\leq  e^{(\lambda_{j+1}-\lambda_j+8\varepsilon)mn}\cdot c.
   	\end{align*}  
   	Similarly,	 for any $v\in H_j(x_{-n-1})$,  we have
   	\begin{align*}
      	 dist\left(\frac{\mathcal{A}_{f^{-m(n+1)}y}^{m(n+1)}(v)}{\|\mathcal{A}_{f^{-m(n+1)}y}^{m(n+1)}(v)\|},\mathcal{A}_{f^{-mn}y}^{mn}H_j(x_{-n})\right)
   	\leq  e^{(\lambda_{j+1}-\lambda_j+8\varepsilon)mn}\cdot c. 
   	\end{align*}
   	Thus
   	 \begin{equation*}
   	 \begin{split}
   	      \hat{\delta}\big(\mathcal{A}_{f^{-mn}y}^{mn}H_j(x_{-n}),\mathcal{A}_{f^{-m(n+1)}y}^{m(n+1)}H_j(x_{-n-1})\big)
   	\leq  e^{(\lambda_{j+1}-\lambda_j+8\varepsilon)mn}\cdot c,
   	\end{split}
   	 \end{equation*}
   	which implies  $\{\mathcal{A}_{f^{-mn}y}^{mn}H_j(x_{-n})\}_{n\geq 1}$ is a Cauchy sequence in  $(\mathcal{G}(X),d_H)$. Similarly,  $\{\mathcal{A}_{f^{mn}y}^{-mn}F_j(x_{n})\}_{n\geq 1}$ is a  Cauchy sequence in  $(\mathcal{G}(X),d_H)$.
   	Let 
   \begin{equation*}
   	H_j(y):=\lim\limits_{n\to\infty} \mathcal{A}_{f^{-mn}y}^{mn}H_j(x_{-n}),  \quad
   F_j(y):=\lim\limits_{n\to\infty}\mathcal{A}_{f^{mn}y}^{-mn}F_j(x_{n}). 
   \end{equation*}
   Since $X= \mathcal{A}_y^{mn}H_j(x_{0})\oplus F_j(x_{n})$, similar to the proof of \eqref{3.20}, one sees  $X= H_j(x_{0})\oplus\mathcal{A}_{f^{mn}y}^{-mn} F_j(x_{n})$. Thus 
   $$\text{ codim}\big(\mathcal{A}_{f^{mn}y}^{-mn}F_j(x_{n})\big)=D_j=\dim\big(\mathcal{A}_{f^{-mn}y}^{mn}H_j(x_{-n})\big), $$
   where $D_j=d_1+\cdots+d_j$. It follows $H_j(y)\in \mathcal{G}_{D_j}(X),F_j(y)\in \mathcal{G}^{D_j}(X)$, since $\mathcal{G}_{D_j}(X)$ and $\mathcal{G}^{D_j}(X)$ are closed subset of  $\mathcal{G}(X)$.  Notice that $H_j(y)\subset U_j(x_0,\theta_0),$ and $ F_j(y)\subset \mathcal{A}_{f^{m}y}^{-m}V_j(x_1,\theta_0)\subset V_j(x_0,\theta_0)$, we conclude
   $$ X= H_j(y)\oplus F_j(y).$$
   And by Lemma \ref{lemma 3.1}, we have 
    \begin{equation*}
    e^{(\lambda_j-4\varepsilon)m}\|u\|\leq \|\mathcal{A}_y^m(u)\|\leq e^{(\lambda_1+4\varepsilon)m}\|u\|,\quad \forall u\in H_j(y), 
    \end{equation*}
  \begin{equation*}
   \|\mathcal{A}_y^m(v)\|\leq e^{(\lambda_{j+1}+4\varepsilon)m}\|v\|,\quad \forall v\in F_j(y). 
  \end{equation*}
   
   In general, for any $z\in \Lambda,$ there exists $0\leq k\leq m-1,y\in \Lambda^*$ such that $z=f^ky.$ similar to the proof above, we can also get
   \begin{equation*}
   H_j(z):=\lim\limits_{n\to\infty} \mathcal{A}_{f^{-mn}(f^ky)}^{mn}H_j(f^kx_{-n}),  
   \end{equation*}
   \begin{equation*}
   F_j(z):=\lim\limits_{n\to\infty}\mathcal{A}_{f^{mn}(f^ky)}^{-mn}F_j(f^kx_{n}),
   \end{equation*} 
   satisfying $ X= H_j(z)\oplus F_j(z),$ $H_j(z)\in \mathcal{G}_{D_j}(X),F_j(z)\in \mathcal{G}^{D_j}(X)$, and
   	\begin{equation}\label{3.22}
   	e^{(\lambda_j-4\varepsilon)m}\|u\|\leq \|\mathcal{A}_z^m(u)\|\leq e^{(\lambda_1+4\varepsilon)m}\|u\|,\quad \forall u\in H_j(z), 
   \end{equation}
   \begin{equation}\label{3.23}
    \|\mathcal{A}_z^m(v)\|\leq e^{(\lambda_{j+1}+4\varepsilon)m}\|v\|,\quad \forall v\in F_j(z). 
   \end{equation}
\begin{claim}
      	The splitting  $ X= H_j(z)\oplus F_j(z)$ is $\mathcal{A}$-invariant on $\Lambda$. 
\end{claim}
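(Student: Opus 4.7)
The plan is to verify, for every $z \in \Lambda$ and every $1 \le j \le i$, that $A(z) H_j(z) = H_j(fz)$ and $A(z) F_j(z) \subset F_j(fz)$. By the symmetry between forward and backward time, I would carry out the argument in detail for $H_j$; the inclusion for $F_j$ is analogous, using the $V_j$-cones and backward cocycle of Lemma \ref{lemma 3.1}. Since $\Lambda = \bigcup_{k=0}^{m-1} f^k \Lambda^*$, any $z \in \Lambda$ may be written as $z = f^k y$ with $y \in \Lambda^*$ and $0 \le k \le m-1$.

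First I would establish the time-$m$ invariance $\mathcal{A}_y^m H_j(y) = H_j(f^m y)$ on $\Lambda^*$ by a re-indexing of the defining limit. For $y \in \Lambda^*$ with associated pseudo-orbit $(x_n)_{n\in\mathbb{Z}}$, the shifted sequence $(x_{n+1})_{n\in\mathbb{Z}}$ serves as the pseudo-orbit for $f^m y \in \Lambda^*$. Since $\mathcal{A}_y^m$ is a fixed injective bounded operator, the map $E \mapsto \mathcal{A}_y^m E$ is continuous on $\mathcal{G}_{D_j}(X)$ with the Hausdorff metric, so pulling it through the limit and substituting $n' = n+1$ gives
\[
\mathcal{A}_y^m H_j(y) = \lim_n \mathcal{A}_{f^{-mn}y}^{m(n+1)} H_j(x_{-n}) = \lim_{n'} \mathcal{A}_{f^{-mn'}(f^m y)}^{mn'} H_j(x_{-n'+1}) = H_j(f^m y).
\]

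Next I would establish the single-step $\mathcal{A}$-invariance $A(z) H_j(z) = H_j(fz)$ on all of $\Lambda$. Applying $A(z) = A(f^k y)$ inside the limit for $H_j(z)$ and splitting off one factor of $A$ at the left gives
\[
A(z) H_j(z) = \lim_n \mathcal{A}_{f^{k+1-mn}y}^{mn} \bigl[ A(f^{k-mn}y) H_j(f^k x_{-n}) \bigr].
\]
On the other hand, using Oseledets $\mathcal{A}$-invariance at the pseudo-orbit point $x_{-n}$ (i.e.\ $A(f^k x_{-n}) H_j(f^k x_{-n}) = H_j(f^{k+1} x_{-n})$), the target $H_j(fz)$ admits a parallel representation
\[
H_j(fz) = \lim_n \mathcal{A}_{f^{k+1-mn}y}^{mn} \bigl[ A(f^k x_{-n}) H_j(f^k x_{-n}) \bigr],
\]
valid both for $0 \le k \le m-2$ (directly from the definition at $fz = f^{k+1}y$) and for $k = m-1$ (by the same re-indexing as in the previous paragraph, applied to $H_j(f^m y)$). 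The two bracketed $D_j$-dimensional subspaces differ only by applying two H\"older-close operators $A(f^{k-mn}y)$ and $A(f^k x_{-n})$ to the same subspace, with the closeness governed by the shadowing bound $d(f^{k-mn}y, f^k x_{-n}) \le \rho e^{-\chi \min\{k,m-k\}/2}$; hence their gap is uniformly bounded in $n$, and for $n$ large both lie in $U_j(f^{k+1} x_{-n}, 2\theta_0)$. Iterating Lemma \ref{lemma 3.1} along the pseudo-orbit then places both images inside a common cone $U_j(f^{k+1} x_0, c\eta^n \theta_0)$ with $c$ uniform and $\eta \in (0,1)$. Since both sequences are Cauchy $D_j$-dimensional subspaces squeezed into the same exponentially shrinking cone, they must share their limit, yielding $A(z) H_j(z) = H_j(fz)$.

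The principal obstacle is this cone-collapse step: one needs a quantitative check that the perturbed subspace $V_n := A(f^{k-mn}y) H_j(f^k x_{-n})$ actually lies in an admissible $U_j$-cone at $f^{k+1}x_{-n}$ of uniformly controlled width before Lemma \ref{lemma 3.1} can be iterated over $n$ times. Verifying this uses only the H\"older norm of $A$ together with the Lyapunov-norm bounds from Lemma \ref{lemma 2.2}, and once it is established the $\eta^n$-contraction from Lemma \ref{lemma 3.1} closes the argument. The analogous inclusion $A(z) F_j(z) \subset F_j(fz)$ follows by reversing time throughout and using the $V_j$-cones; the weaker conclusion (inclusion rather than equality) reflects the potential non-surjectivity of $A$.
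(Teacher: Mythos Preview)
Your approach is correct and rests on the same mechanism as the paper: the cone contraction of Lemma~\ref{lemma 3.1} forces any $D_j$-dimensional subspace starting inside $U_j(f^kx_{-n},\theta_0)$ to have the same forward limit. The paper isolates this as a single robustness statement (equation~\eqref{3.25}): for \emph{any} sequence $H_{n,k}\subset U_j(f^kx_{-n},\theta_0)$ one has $H_j(f^ky)=\lim_n\mathcal{A}_{f^{-mn}(f^ky)}^{mn}H_{n,k}$, proved not by iterating the cone inclusion but by comparing growth rates (the $e^{(\lambda_{j+1}-\lambda_j+8\varepsilon)mn}$ estimate). Invariance then follows in one stroke: since $A(f^{-mn}z)H_j(f^kx_{-n})$ lands in the right cone (via~\eqref{3.0.1} for $k\le m-2$, and~\eqref{3.0.1} followed by~\eqref{3.5.7} for $k=m-1$), one may rewrite $H_j(fz)$ as a limit of $\mathcal{A}_{f^{-mn}(fz)}^{mn}A(f^{-mn}z)H_j(f^kx_{-n})=A(z)\,\mathcal{A}_{f^{-mn}z}^{mn}H_j(f^kx_{-n})$ and conclude with $\operatorname{dist}(A(z)u,\cdot)\le\|A(z)\|\operatorname{dist}(u,\cdot)$.

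Your organization is a bit more roundabout, and the $k=m-1$ case has a small gap as written. The re-indexing from your first paragraph yields $H_j(f^my)=\lim_n\mathcal{A}_{f^{m-mn}y}^{mn}\bigl[\mathcal{A}_{f^{-mn}y}^{m}H_j(x_{-n})\bigr]$, whose bracket involves the cocycle along the $y$-orbit; this is not the same as $A(f^{m-1}x_{-n})H_j(f^{m-1}x_{-n})=H_j(f^mx_{-n})$, which uses the cocycle along the $x_{-n}$-orbit. So your ``parallel representation'' of $H_j(fz)$ for $k=m-1$ does not follow from re-indexing alone. The fix is exactly your cone-collapse argument (both brackets lie in $U_j(x_{-n+1},\theta_0)$ by~\eqref{3.5.7}), but once you invoke that, the separate time-$m$ step becomes redundant and you have essentially reproved~\eqref{3.25}. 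Also note that Lemma~\ref{lemma 3.1} as stated gives $U_j(\cdot,\theta_0)\mapsto U_j(\cdot,\eta\theta_0)$ only for the fixed aperture $\theta_0$; to get the $\eta^n$ contraction you want, you need the range version~\eqref{3.5.8} from its proof, or else argue via growth rates as the paper does.
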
   
  	\begin{proof}[Proof of the claim]
  	 Before proving the invariance of the splitting, we show that: for any $y\in \Lambda^*,0\leq k\leq m-1$, any $H_{n,k}\in \mathcal{G}_{D_j}(X)$  satisfying $H_{n,k} \subset U_j(f^kx_{-n},\theta_0)$ , one has 
  	 \[H_j(f^ky)=\lim\limits_{n\to\infty} \mathcal{A}_{f^{-mn}(f^ky)}^{mn}{H_{n,k}}.\]
   Indeed, since $X=\mathcal{A}_{f^{-mn}(f^ky)}^{mn}H_{n,k}\oplus F_j(x_0)$. Thus for any $u\in H_j(f^kx_{-n})$, similar to the estimation above, we have
  	 \begin{equation*}
  	 \begin{split}
  	 & dist\left(\frac{\mathcal{A}_{f^{-mn}(f^ky)}^{mn}(u)}{\|\mathcal{A}_{f^{-mn}(f^ky)}^{mn}(u)\|},\mathcal{A}_{f^{-mn}(f^ky)}^{mn}H_{n,k}\right)\\
  	 = & ~ \frac{\|u\|}{\|\mathcal{A}_{f^{-mn}(f^ky)}^{mn}(u)\|}\cdot\inf_{v\in H_{n,k} }\|\mathcal{A}_{f^{-mn}(f^ky)}^{mn}(\frac{u}{\|u\|}- v)\| \\
  	 \leq &~   e^{(\lambda_{j+1}-\lambda_j+8\varepsilon)mn}\cdot c,
  	 \end{split}
  	 \end{equation*}
  	which implies
  	 $d_H\left(\mathcal{A}_{f^{-mn}(f^ky)}^{mn}H_j(x_{-n}),\mathcal{A}_{f^{-mn}(f^ky)}^{mn}{H_{n,k}}\right)\to 0.$
  	 Hence,
  	 \begin{equation}\label{3.25}
  	 H_j(f^ky)=\lim\limits_{n\to\infty} \mathcal{A}_{f^{-mn}(f^ky)}^{mn}{H_{n,k}}, \quad\forall 0\leq k\leq m-1.
  	 \end{equation}
  	  Now for any $z=f^ky\in \Lambda,$ where $y\in\Lambda^*,0\leq k\leq m-1$. 
    If $0\leq k\leq m-2$, then by \eqref{3.0.1},  $A(f^{-mn}z)H_j(f^kx_{-n})\in U_j(f^{k+1}x_{-n},\theta_0)$; if $k=m-1$, then  by \eqref{3.5.7}, $A(f^{-mn}z)H_j(f^{m-1}x_{-n})\in U_j(x_{-n+1},\theta_0)$, thus \eqref{3.25} gives
    \[H_j(fz)=\lim\limits_{n\to\infty} \mathcal{A}_{f^{-mn}(fz)}^{mn}\left(A(f^{-mn}z)H_j(f^kx_{-n})\right).  \] 
  	 Therefore, for any $u\in H_j(z),$
  	 \begin{align*}
  	 &dist\left(A(z)u,\mathcal{A}_{f^{-mn}(fz)}^{mn}A(f^{-mn}z)H_j(f^kx_{-n})  \right)\\
  	 \leq &~\|A(z)\|\cdot dist\left(u,\mathcal{A}_{f^{-mn}(z)}^{mn}H_j(f^kx_{-n})  \right)\\
  	 \leq &~\|A(z)\|\cdot d_H\left(H_j(z),\mathcal{A}_{f^{-mn}(z)}^{mn}H_j(f^kx_{-n})  \right)\to 0,
  	 \end{align*}
  	 which implies $A(z)u\in H_j(fz)$. Thus $A(z)H_j(z)\subset H_j(fz)$. It can be proved analogously that  $A(z)F_j(z)\subset F_j(fz)$. Since $\dim(A(z)H_j(z))= \dim(H_j(fz))$, we have $A(z)H_j(z)=H_j(fz)$. 
  	   	\end{proof}
 \begin{claim}
   The splitting  $ X= H_j(z)\oplus F_j(z)$ is continuous on $\Lambda$. 
\end{claim}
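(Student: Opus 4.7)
The plan is to transfer continuity from the finite-step approximations to the infinite limit via the uniform exponential rate of convergence already established. The Cauchy estimate in the proof of the previous claim shows that for every $z\in \Lambda^*$ and every admissible symbolic coding $(x_n(z))_{n\in\mathbb Z}$,
\[
d_H\bigl(\mathcal{A}_{f^{-mn}z}^{mn}H_j(x_{-n}(z)),\,H_j(z)\bigr)\leq c\cdot e^{(\lambda_{j+1}-\lambda_j+8\varepsilon)mn},
\]
with $c$ independent of $z$. So for any $\eta>0$ we can choose a single $N$ making the right side smaller than $\eta/3$ uniformly on $\Lambda^*$.

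Next I would check that for each fixed $N$ the finite approximation $z\mapsto \mathcal{A}_{f^{-mN}z}^{mN}H_j(x_{-N}(z))$ is continuous on $\Lambda^*$. This splits into two pieces. First, the itinerary map $z\mapsto x_{-N}(z)$ is locally constant on $\Lambda^*$: it was already shown earlier that for distinct $x_1,x_2\in F_m\cap P$ the cylinders $C\bigl(x_i,R(q)\cap f^{-m}R(q)\bigr)$ are disjoint, and being finitely many disjoint compact sets they have positive mutual distance, so the assignment of the cylinder containing $f^{-mN}z$ is locally constant, and hence so is $z\mapsto H_j(x_{-N}(z))$, which takes only finitely many values. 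Second, since $A$ is H\"older continuous, the finite product $\mathcal{A}_{f^{-mN}z}^{mN}$ depends continuously on $z$ in operator norm, and therefore its action on a fixed closed subspace depends continuously on $z$ in the Hausdorff metric $d_H$. Combining, for $z,z'$ close enough that $x_{-N}(z)=x_{-N}(z')$ and that $\|\mathcal{A}_{f^{-mN}z}^{mN}-\mathcal{A}_{f^{-mN}z'}^{mN}\|$ is small, the triangle inequality applied to the Cauchy bound yields $d_H(H_j(z),H_j(z'))<\eta$. An identical argument, using the analogous Cauchy sequence for $\mathcal{A}_{f^{mn}y}^{-mn}F_j(x_n)$, gives the continuity of $F_j$ on $\Lambda^*$.

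To finish, I would extend continuity from $\Lambda^*$ to $\Lambda=\Lambda^*\cup f(\Lambda^*)\cup\cdots\cup f^{m-1}(\Lambda^*)$. For $z=f^ky$ with $y\in\Lambda^*$ and $0\leq k\leq m-1$, the $\mathcal{A}$-invariance established in the previous claim gives $H_j(z)=A(f^{k-1}y)\cdots A(y)H_j(y)$, so continuity on $\Lambda^*$ together with continuity of $A$ transfers to continuity on each $f^k(\Lambda^*)$; since $\Lambda$ is a finite union of compact sets on each of which $H_j$ is continuous, and the value of $H_j(z)$ is independent of the choice of representation $z=f^ky$ (by the uniqueness for any $H_{n,k}\subset U_j(f^kx_{-n},\theta_0)$ in $\mathcal{G}_{D_j}(X)$ shown in the previous claim), we conclude that $H_j$, and analogously $F_j$, is continuous on $\Lambda$.

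The main obstacle is the verification of local constancy of the symbolic coding; everything else is a routine combination of uniform convergence, H\"older continuity of $A$, and the invariance already proved. Fortunately the disjointness of the Markov-type cylinders was already established earlier in the construction of $\Lambda^*$, so this obstacle is essentially already dispatched and only needs to be invoked.
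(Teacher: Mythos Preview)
Your approach is correct and genuinely different from the paper's. The paper proves continuity by directly estimating the cross-projection norms $\|\pi^{F_j}_z|_{H_j(\tilde z)}\|$ and $\|\pi^{H_j}_z|_{F_j(\tilde z)}\|$ from the invariance already established and the exponential bounds \eqref{3.22}--\eqref{3.23}, then invokes a result from \cite{BM2015} to convert small cross-projections into small Hausdorff distance; this handles all of $\Lambda$ at once and treats $H_j$, $F_j$ symmetrically. Your argument instead realises $H_j$ and $F_j$ as uniform limits of continuous finite approximations, using the Cauchy rate already obtained together with the local constancy of the symbolic itinerary on the horseshoe; this is more self-contained (no appeal to \cite{BM2015}) and makes explicit use of the Markov structure, at the cost of separating $\Lambda^*$ from its iterates. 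One caveat: the $F_j$ step is not literally ``identical'' to the $H_j$ step, since the finite approximation for $F_j$ is a \emph{preimage} $(\mathcal A^{mN}_z)^{-1}(F)$ rather than a forward image, and $\mathcal A^{-mN}$ is not a bounded operator; you should argue continuity of this preimage via the finite-dimensional complement $H_j(x_0)$, combining the lower bound of Lemma~\ref{lemma 3.1}(i) with the uniform projection bound \eqref{3.21}.
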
 
    \begin{proof}[Proof of the Claim]
       Let $\pi^{H_j}_z$, $\pi^{H_j}_z$ be the projection operators associated with  $ X= H_j(z)\oplus F_j(z)$. Then for any $\tilde{z}\in \Lambda,u\in H_j(\tilde{z})$, let $w=\mathcal{A}_{\tilde{z}}^{-mn}(u)\in H_j(f^{-mn}\tilde{z})$, by the invariance of the splitting and \eqref{3.21}, \eqref{3.22}, \eqref{3.23}, we may estimate
       \begin{align*}
       \|\pi^{F_j}_z\mathcal{A}_{f^{-mn}{z}}^{mn}(w)\|& = \|\mathcal{A}_{f^{-mn}{z}}^{mn}\pi^{F_j}_{f^{-mn}{z}}(w)\|\\
                                                 & \leq e^{(\lambda_{j+1}+4\varepsilon)mn}\|\pi^{F_j}_{f^{-mn}{z}}(w)\|\\
                                                 & \leq c\cdot e^{(\lambda_{j+1}+4\varepsilon)mn}\|w\|\\
                                                 & \leq  c\cdot e^{(\lambda_{j+1}-\lambda_{j}+8\varepsilon)mn}\|u\|.
       \end{align*}
       Thus 
       	\begin{align*}
       \|\pi^{F_j}_z(u)\| & = \|\pi^{F_j}_z\mathcal{A}_{f^{-mn}\tilde{z}}^{mn}(w)\| \\
       & \leq \|\pi^{F_j}_z\left(\mathcal{A}_{f^{-mn}\tilde{z}}^{mn}-\mathcal{A}_{f^{-mn}{z}}^{mn}\right)(w)\|+\|\pi^{F_j}_z\mathcal{A}_{f^{-mn}{z}}^{mn}(w)\| \\
       &\leq c\cdot \left(e^{(4\varepsilon-\lambda_j)mn}\|\mathcal{A}_{f^{-mn}\tilde{z}}^{mn}-\mathcal{A}_{f^{-mn}{z}}^{mn}\|+e^{(\lambda_{j+1}-\lambda_{j}+8\varepsilon)mn}\right)\|u\|,
       \end{align*}
       Which gives 
       $$\|\pi^{F_j}_z|_{H_j(\tilde{z})}\|\leq c\cdot e^{(4\varepsilon-\lambda_j)mn}\|\mathcal{A}_{f^{-mn}\tilde{z}}^{mn}-\mathcal{A}_{f^{-mn}{z}}^{mn}\|+c\cdot e^{(\lambda_{j+1}-\lambda_j+8\varepsilon)mn} .$$
       	Similarly, for $v\in F_j(\tilde{z})$,             
   	\begin{align*}
   	e^{(\lambda_j-4\varepsilon)mn}\|\pi^{H_j}_z(v)\| & \leq \|\mathcal{A}_z^{mn}\circ\pi^{H_j}_z(v)\| \\
   	                                               & = \|\pi^{H_j}_{f^{mn}z}\circ\mathcal{A}_z^{mn}(v)\| \\
   	                                               & \leq \|\pi^{H_j}_{f^{mn}z}\|\cdot\big(\|\mathcal{A}_z^{mn}(v)-\mathcal{A}_{\tilde{z}}^{mn}(v)\|+\|\mathcal{A}_{\tilde{z}}^{mn}(v)\| \big) \\
   	                                               &\leq c\cdot\big(\|\mathcal{A}_z^{mn}-\mathcal{A}_{\tilde{z}}^{mn}\|+e^{(\lambda_{j+1}+4\varepsilon)mn} \big)\|v\|.
   	\end{align*}
   	Therefore, 
   	$$\|\pi^{H_j}_z|_{F_j(\tilde{z})}\|\leq c\cdot e^{(4\varepsilon-\lambda_j)mn}\|\mathcal{A}_z^{mn}-\mathcal{A}_{\tilde{z}}^{mn}\|+c\cdot e^{(\lambda_{j+1}-\lambda_j+8\varepsilon)mn} .$$
   	
   Now for any $\tau>0,$ take $n$ large enough such that $c\cdot e^{(\lambda_{j+1}-\lambda_j+8\varepsilon)mn}<\tau,$ and then take $\delta>0$ small enough such that for any $\tilde{z}\in B(z,\delta),$
    $$c\cdot e^{(4\varepsilon-\lambda_j)mn}\cdot\max\left\{\|\mathcal{A}_{f^{-mn}\tilde{z}}^{mn}-\mathcal{A}_{f^{-mn}{z}}^{mn}\|,\|\mathcal{A}_z^{mn}-\mathcal{A}_{\tilde{z}}^{mn}\|\right\}<\tau.$$ Then we have $\|\pi^{H_j}_z|_{F_j(\tilde{z})}\|\leq 2 \tau,\|\pi^{F_j}_z|_{H_j(\tilde{z})}\leq 2\tau.$
   	Thus by \cite[Remark 10]{BM2015}, 
   	$$d_H\big(F_j(z),F_j(\tilde{z})\big)\leq 4D_j\|\pi^{H_j}_z|_{F_j(\tilde{z})}\|\leq 8D_j\tau,$$ 
   	$$d_H\big(H_j(z),H_j(\tilde{z})\big)\leq 4D_j\|\pi^{F_j}_z|_{H_j(\tilde{z})}\|\leq 8D_j\tau,$$
   	which gives the continuity of $F_j(z)$ and $H_j(z)$.
   \end{proof}
   At last, for any $z\in\Lambda,1\leq j\leq i$, let $E_j(z):=H_j(z)\cap F_{j-1}(z)$, where $F_0(z):={X}$. Then $\dim(E_j)=d_j$. Since  
   \[A(z)E_j(z)\subset A(z)H_j(z)\cap A(z)F_{j-1}(z)\subset H_j(fz)\cap F_{j-1}(fz)=E_j(fz), \]
   and $\dim\left(A(z)E_j(z)\right)=d_j=\dim(E_j(fz))$, we have $A(z)E_j(z)=E_j(fz).$
     Therefore, we obtain a continuous $\mathcal{A}$-invariant splitting on $\Lambda$:
   \[{X}=E_1(z)\oplus\cdots\oplus E_i(z)\oplus F_i(z),\]
   	with~$\dim(E_j)=d_j, \forall 1\leq j\leq i$. And by \eqref{3.22},  \eqref{3.23}, we conclude
   	\[e^{(\lambda_j-4\varepsilon)m}\|u\|\leq \|\mathcal{A}_z^{m}(u)\|\leq e^{(\lambda_j+4\varepsilon)m}\|u\|,\quad \forall u\in E_j(z), 1\leq j\leq i, \]
   	\[\|\mathcal{A}_z^{m}(v)\|\leq e^{(\lambda_{i+1}+4\varepsilon)m}\|v\|,\quad \forall v\in F_i(z). \]
   	This completes the proof.
   \end{proof}

 \bibliographystyle{amsplain}

\end{document}